\theoremstyle{plain}
\newtheorem{theorem}{Theorem}[section]
\newtheorem{cor}[theorem]{Corollary}
\newtheorem{con}[theorem]{Conjecture}
\newtheorem{prop}[theorem]{Proposition}
\newtheorem{lemma}[theorem]{Lemma}
\theoremstyle{definition}
\numberwithin{equation}{section}
\newcommand{\bs}{\backslash}
\newcommand{\Cc}{\mathcal{C}}
\newcommand{\C}{\mathbb{C}}
\newcommand{\Lc}{\mathcal{L}}
\newcommand{\E}{\mathcal{E}}
\newcommand{\Zc}{\mathcal{Z}}
\newcommand{\R}{\mathbb{R}}
\newcommand{\N}{\mathbb{N}}
\newcommand{\Aut}{\operatorname{Aut}}
\newcommand{\Ind}{\operatorname{Ind}}
\newcommand{\Hom}{\operatorname{Hom}}
\newcommand{\Ad}{\operatorname{Ad}}
\newcommand{\ad}{\operatorname{ad}}
\newcommand{\id}{\operatorname{id}}
\newcommand{\interior}{\operatorname{int}}
\newcommand{\Spec}{\operatorname{spec}}
\newcommand{\rank}{\operatorname{rank}}
\newcommand{\re}{\operatorname{Re}}
\newcommand{\algebraicgroup}[1]{{\underline{#1}}}
\newcommand{\uG}{\algebraicgroup{G}}
\newcommand{\uH}{\algebraicgroup{H}}
\def\af{\mathfrak{a}}
\def\gf{\mathfrak{g}}
\def\cf{\mathfrak{c}}
\def\hf{\mathfrak{h}}
\def\jf{\mathfrak{j}}
\def\kf{\mathfrak{k}}
\def\mf{\mathfrak{m}}
\def\nf{\mathfrak{n}}
\def\sf{\mathfrak{s}}
\def\tf{\mathfrak{t}}
\def\uf{\mathfrak{u}}
\def\la{\langle}
\def\ra{\rangle}
\def\1{{\bf1}}
\def\U{\mathcal{U}}
\def\Cc{\mathcal{C}}
\def\P{\mathcal{P}}
\def\oline{\overline}
\title[Ellipticity]
{Ellipticity and discrete series
\smallbreak
{\tiny {\it D\MakeLowercase{edicated to} J\MakeLowercase{oseph} B\MakeLowercase{ernstein for his appreciation of soft methods}}}}
\subjclass[2010]{22E46, 22E30, 22F30}
\begin{document}

\begin{abstract} We explain by elementary means why the existence of a discrete series representation
of a real reductive group $G$ implies the existence of a compact Cartan subgroup of $G$. The presented approach has the potential to generalize to real spherical spaces.
\end{abstract}

\author[Kr\"otz]{Bernhard Kr\"{o}tz}
\email{bkroetz@gmx.de}
\address{Universit\"at Paderborn, Institut f\"ur Mathematik\\ Warburger Stra\ss e 100,
33098 Paderborn}

\author[Kuit]{Job J. Kuit}
\email{j.j.kuit@gmail.com}
\address{Universit\"at Paderborn, Institut f\"ur Mathematik\\Warburger Stra\ss e 100,
33098 Paderborn}

\author[Opdam]{Eric M. Opdam}
\email{e.m.opdam@uva.nl}
\address{University of Amsterdam, Korteweg-de Vries Institute for Mathematics\\P.O. Box 94248, 1090 GE Amsterdam}

\author[Schlichtkrull]{Henrik Schlichtkrull}
\email{schlicht@math.ku.dk}
\address{University of Copenhagen, Department of Mathematics\\Universitetsparken 5,
2100 Copenhagen \O}

\maketitle

\section{Introduction}

Let $\uG$ be a connected reductive algebraic group defined over $\R$ and $G:=\uG(\R)$ its group of real points.
In this article we give an elementary proof that Harish-Chandra's compact Cartan subgroup
condition is necessary for $G$ to have discrete series. To explain the background,
we first describe the problem in the more general context of real spherical spaces.

\subsection{Real spherical spaces}
Let $\uH\subset\uG$ be an algebraic subgroup defined over $\R$ and $H=\uH(\R)$.
A suitable framework for harmonic analysis on $Z:=G/H$ is obtained by the request
that $Z$ is real spherical, i.e., there exists an open orbit on $Z$  for the natural action
of a minimal parabolic subgroup $P$ of $G$.

\par Our interest is to obtain a geometric criterion
for the existence of discrete series on a  unimodular real spherical space
$Z$. We recall that by definition
the discrete series for $Z$ consists of the irreducible subrepresentations of the
regular representation of $G$ on $L^2(Z)$. The following condition
for its existence was conjectured in  \cite[(1.2)]{KKOS}:

\begin{con} \label{con intro} Let $Z$ be a unimodular real spherical space.
A necessary and sufficient condition for the existence of a discrete series representation for $Z$
is that the interior of $(\hf^\perp)_{\rm ell}$ in $\hf^{\perp}$ is non-empty.
\end{con}

Let us explain the notation. Let $\gf, \hf$ be the Lie algebras of $G$ and $H$. Then
$\hf^\perp \simeq(\gf/ \hf)^*$ is the cotangent space $T_{z_0}^*Z$ at the base point $z_0=H\in Z$, and the index
`ell' stands for elliptic elements.

The sufficiency of the condition has been established in \cite{DKKS}. We recall the result:

\begin{theorem}\label{th intro} Let $Z$ be a unimodular real spherical space.
If the interior of $(\hf^\perp)_{\rm ell}$  in $\hf^{\perp}$ is non-empty,
then there exist infinitely many representations in the discrete series for $Z$.
\end{theorem}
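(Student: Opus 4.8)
The plan is to produce, for each parameter in an infinite lattice inside the given open set, a nonzero square-integrable joint eigenfunction on $Z$ for the algebra $\Db(Z)$ of invariant differential operators, and to deduce by a soft argument that each such eigenfunction forces a contribution to the discrete spectrum. The reduction is this. Write $L^{2}(Z)=L^{2}(Z)_{\rm disc}\oplus L^{2}(Z)_{\rm cont}$, where $L^{2}(Z)_{\rm cont}$ is an atomless direct integral in which each infinitesimal character occurs only on a measure-zero set of parameters. A nonzero joint $\Db(Z)$-eigenfunction $f$ has a single infinitesimal character; its component in $L^{2}(Z)_{\rm cont}$ is therefore supported on a measure-zero set of parameters and so vanishes, whence the component of $f$ in $L^{2}(Z)_{\rm disc}$ is nonzero. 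It thus suffices to construct such eigenfunctions and to exhibit infinitely many with pairwise distinct infinitesimal characters, since a fixed irreducible discrete series carries only one such character.

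Next I would make the hypothesis concrete. A nondegenerate $\Ad$-invariant form identifies $\hf^{\perp}\cong(\gf/\hf)^{*}$ with a subspace of $\gf$, so that an element $\xi$ of the open set $(\hf^{\perp})_{\rm ell}$ corresponds to a genuine elliptic element $X_{\xi}\in\gf$, one lying in the Lie algebra of a compact torus. Geometrically the hypothesis says that $Z$ carries a compact Cartan subspace of full dimension at $z_{0}$; in the group case $Z=(\uG\times\uG)/\Delta\uG$ this is exactly Harish-Chandra's equal-rank condition, and the discrete series one expects are those attached to regular integral elliptic parameters. Under the Harish-Chandra homomorphism for $\Db(Z)$ the elliptic point $\xi$ determines a candidate infinitesimal character of purely imaginary type on the compact directions, which is the regime in which one expects square-integrability.

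The step I expect to be the main obstacle is to realise each admissible $\xi$ and to verify square-integrability. I would take a representation $\pi_{\xi}$ with infinitesimal character determined by $\xi$ and carrying a nonzero $H$-invariant distribution vector $\eta\in(\pi_{\xi}^{-\infty})^{H}$ — produced from the open orbit $P\cdot z_{0}$ furnished by real sphericity — and form the generalised matrix coefficient $f_{\xi}(gH)=\eta(\pi_{\xi}(g^{-1})v)$ for a $K$-finite $v$. This $f_{\xi}$ is automatically a $\Db(Z)$-eigenfunction with the infinitesimal character found above. To test membership in $L^{2}(Z)$ one uses the polar decomposition $Z=KA_{Z}\cdot z_{0}$ (up to a compact factor), where $\dim\af_{Z}$ is the real rank of $Z$, together with the asymptotic expansion of $f_{\xi}$ along the walls of the compression cone $\af_{Z}^{-}$, with exponents $\Lambda\in\af_{Z,\C}^{*}$. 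Square-integrability amounts to each split exponent satisfying $\re\Lambda<-\rho_{Z}$ strictly; the role of ellipticity and regularity of $\xi$ is precisely to force all these exponents into this $L^{2}$-range, so that the decay of $f_{\xi}$ strictly beats the volume growth $e^{2\rho_{Z}}$. Establishing this strict domination simultaneously along every $G$-stable boundary degeneration of $Z$ — using the structure theory of such degenerations and the little Weyl group action on the set of exponents — is the real difficulty.

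Finally, the regular integral elliptic parameters form a full-rank lattice meeting the open cone $(\hf^{\perp})_{\rm ell}$, hence an infinite set; by the reduction of the first paragraph each yields a nonzero element of $L^{2}(Z)_{\rm disc}$, and distinct parameters produce distinct infinitesimal characters and therefore inequivalent irreducible summands. Thus $Z$ admits infinitely many discrete series. The two residual points are the non-vanishing $f_{\xi}\neq0$, ensured by choosing $v$ with $\eta(v)\neq0$, and the mutual inequivalence, which is immediate from the separation of $\Db(Z)$-eigenvalues.
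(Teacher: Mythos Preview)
The paper does not actually prove this theorem: it is quoted from \cite{DKKS}, and the present article only remarks that \cite{KKOS} was a central tool in that proof. So there is no proof here to compare your proposal against. That said, your outline has genuine gaps that would need to be filled before it could stand as a proof.

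The most serious gap is in the construction step. You write ``take a representation $\pi_{\xi}$ with infinitesimal character determined by $\xi$ and carrying a nonzero $H$-invariant distribution vector $\eta\in(\pi_{\xi}^{-\infty})^{H}$ --- produced from the open orbit $P\cdot z_{0}$''. But the open $P$-orbit furnishes $H$-invariant functionals on \emph{principal series} representations, not on an arbitrary $\pi_{\xi}$ chosen to have a prescribed elliptic infinitesimal character. You have not explained which $\pi_{\xi}$ you take, nor why it should admit a nonzero $H$-fixed distribution vector; these two requirements (correct infinitesimal character and $H$-distinguished) are in tension, and reconciling them is a substantial part of the problem rather than a preliminary.

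Second, you yourself flag the square-integrability verification as ``the real difficulty'' and then do not address it beyond naming the ingredients (compression cone, boundary degenerations, little Weyl group). Saying that ellipticity and regularity of $\xi$ ``force all these exponents into this $L^{2}$-range'' is the statement to be proved, not an argument for it. In the group case this is Harish-Chandra's construction of discrete series, which is already deep; for general real spherical $Z$ the control of leading exponents along all boundary strata is considerably harder and is precisely what the machinery of \cite{DKKS} is built to handle. As written, your proposal is a reasonable heuristic for why one expects the theorem to hold, but the two load-bearing steps --- existence of $H$-distinguished $\pi_{\xi}$ with the right parameter, and the exponent estimate --- remain assertions.
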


A central tool in the proof of this theorem is a property
of the infinitesimal characters of discrete series representations for $Z$, derived  in \cite{KKOS}. The same property is
crucial for our approach to necessity. Some notation is needed
in order to describe it.

Let $G=KAN$ be an Iwasawa decomposition
for $G$ and $P=MAN$ the associated minimal parabolic subgroup, with $M=Z_K(A)$ the centralizer of $A$ in $K$.
Let $\tf\subset \mf$ be a maximal torus. Then $\cf = \af +\tf$ is a maximally split Cartan subalgebra of $G$, unique up to
conjugation. With $\cf_\R = \af + i \tf$ we obtain a real form
of $\cf_\C$ which is characterized by the property that all roots $\gamma \in \Sigma_\cf = \Sigma(\gf_\C, \cf_\C) \subset \cf_\C^*$ are real valued on $\cf_\R$. Let $V$ be the Harish-Chandra module of a
discrete series representation for $Z$, and let its  infinitesimal character be denoted
$\chi_V \in \Hom_{\rm alg} (\Zc(\gf), \C)$. Using the Harish-Chandra isomorphism
we identify $\chi_V$ with a $W_\cf$-orbit $[\Lambda_V]=W_\cf\cdot \Lambda_V \in\cf_\C^*/ W_\cf$,
where $W_{\cf}$ is the big Weyl group, i.e. the Weyl group of the root system  $\Sigma_{\cf}$ with respect to the Cartan subalgebra $\cf$.

The mentioned result of \cite{KKOS} asserts that there exists an explicit
$W_\cf$-invariant rational lattice $\Lc$, such that
\begin{equation} \label{integral} [\Lambda_V]\subset \Lc \subset \cf_\R^* \end{equation}
for all discrete series representations $V$ of $Z$.  Let us emphasize  in particular that the
parameters $\Lambda_V$ of the discrete series are {\it real}, as the lattice $\Lc$ lies in the real form
$\cf_\R^*$.

The purpose of this article is to explore whether this property of the
infinitesimal character can be used to establish the conjectured necessity of the condition.
To be more precise,
we show that this is the case for the group, regarded as a spherical space. We believe the approach
has the potential to generalize to all real spherical spaces.

\subsection{The group case} In the remainder of this article we consider the group case.
The group $G$ is a real spherical space when looked upon as a geometric
object under its both-sided symmetries of $G \times G$.
 Specialized to this case the conjecture is Harish-Chandra's  beautiful geometric criterion for the existence of discrete series representations for $G$, which results from his deep study of discrete series \cite{HC1, HC2}.

\begin{theorem}\label{thm H-C}{\rm (Harish-Chandra, \cite[Theorem 13]{HC2})}
A necessary and sufficient condition for
$G$ to admit discrete series is that it has a compact Cartan subgroup.
\end{theorem}

\par As mentioned, we provide an elementary proof of the necessity,
based on the property \eqref{integral} for $G$. In the case at hand the proof of this property
is also elementary, as explained in the introduction to \cite{KKOS}.

Let us describe the argument. Let $\sigma$ be the conjugation on $\gf_{\C}$ with respect to $\gf$. We call an element $\Lambda\in \cf_\C^*$ {\it strongly regular} provided that the stabilizer
of $\Lambda$ in the extended Weyl group $W_{\cf, \text{ext}}:=\la W_\cf, -\sigma\ra \subset
\Aut(\Sigma_\cf)$ is trivial.  We show that the existence of a unitary representation with a strongly regular real infinitesimal character implies the existence of a compact
Cartan subgroup, see Corollary \ref{cor sr implies elliptic}.
Knowing that infinitesimal characters of discrete series
are real, the existence of a discrete series representation with strongly regular
infinitesimal character therefore requires the existence of a compact Cartan subgroup.
Finally, we complete the proof by using the
Zuckerman translation principle \cite{Z} to produce from any representation of
the discrete series a discrete series representation with strongly regular infinitesimal
character, see Corollary \ref{cor HC-criterion}. The tools used for this belong to general
representation theory of Harish-Chandra modules. Beyond
the characterization of square integrability in terms of the leading exponents of
asymptotic expansions, the only property of discrete series used at this stage
is the existence of the lattice $\Lc$ satisfying \eqref{integral}.

\medbreak

{\it Acknowledgement:} We thank Joseph Bernstein and the referee for valuable comments.

\section{Notation}

Throughout this article we let $G$ be the open connected subgroup
of $\uG(\R)$ where $\uG$ is a connected reductive group defined over $\R$. We write $G_{\C}$ for the connected group $\uG(\C)$.
As usual we denote the Lie algebra of $G$ by $\gf$ and keep this terminology for subgroups of $G$, i.e., if
$H\subset G$ is a subgroup, then we denote by $\hf$ its Lie algebra. If $\hf$ is a Lie algebra, then we write $\hf_{\C}$ for the complexification of $\hf$.

\par Fix a Cartan involution $\theta$ of $G$ and denote by $K=G^\theta$ the corresponding maximal compact subgroup.
The Lie algebra automorphism of $\gf$  induced by $\theta$, and its linear extension to $\gf_{\C}$, will be denoted by $\theta$ as well. We write
$\gf=\kf +\sf$ for the associated Cartan decomposition.  We fix a maximal abelian subspace
$\af \subset \sf$ and write $A=\exp(\af)$. Further we let $M=Z_K(A)$ and select with
$\tf\subset \mf$ a maximal torus.  We write $T$ for the Cartan subgroup $Z_{M}(\tf)$ of $M$.

We denote by $\sigma: \gf_\C\to \gf_\C$ the complex conjugation with respect to the real form
$\gf$, and let $U:=K \exp(i\sf)$ be the $\theta$-stable maximal compact subgroup of $G_{\C}$, which is obtained as the fixed point subgroup of the antilinear extension
$\theta \circ \sigma$ of the Cartan involution $\theta$ to $G_\C$.

\par We extend $\af$ by  $\tf$ to a Cartan
subalgebra $\cf:=\af +\tf$ of $\gf$, and use the symbol $\sigma$ also for the restriction of $\sigma$ to $\cf_\C$. We write $\Sigma_\cf=\Sigma(\gf_\C, \cf_\C)$ for the
corresponding root system and $\Sigma_\af:=\Sigma_\cf|_{\af}\bs \{0\}$ for the corresponding restricted root
system.
Further we set $\cf_\R:= \af + i\tf$.
Note that $\Sigma_\cf \subset \cf_\R^*$, that $\sigma$ preserves $\Sigma_\cf$ and $\cf_\R$ and that $\sigma\big|_{\cf_{\R}}=-\theta\big|_{\cf_{\R}}$.
We write $C_{\C}$ for the maximal torus of $G_{\C}$ with Lie algebra $\cf_{\C}$. As $G_{\C}$ is a connected algebraic reductive group, the torus $C_{\C}$ is connected. We further define $C:=G\cap C_{\C}$ and  $C_{U}:=C_{\C}\cap U$. Note that $C=TA$ and $C_{U}=T\exp(i\af)$.

\par Let us denote by $W_\cf$ the Weyl group of the root system $\Sigma_\cf$ and likewise we
denote by $W_\af$ the Weyl group of the restricted root system $\Sigma_\af$.
With respect to $\Sigma_\af$ we have the
restricted root space decomposition
$$ \gf = \af \oplus \mf \oplus\bigoplus_{\alpha\in \Sigma_\af} \gf^\alpha\, .$$

\par In the sequel we fix with $\Sigma_\af^+\subset \Sigma_\af$ a positive system. We then let $\Sigma_\cf^+\subset \Sigma_\cf$ be any  positive system which is compatible with $\Sigma_\af^+$, i.e.,
$\Sigma_\af^+=\Sigma_\cf^+|_\af \bs\{0\}$.

\par The positive system $\Sigma_\af^+$ defines a maximal nilpotent subalgebra $\nf=\bigoplus_{\alpha\in \Sigma_\af^+}
\gf^\alpha$. Put $N=\exp \nf$ and note that $P=MAN\subset G$ defines a minimal parabolic subgroup of $G$. We write $\overline{P}$ and $\overline{\nf}$ for $\theta P$ and $\theta\nf$, respectively.

\section{Reading of the existence of maximal compact Cartan subgroup from the infinitesimal character}

As usual we write $\Zc(\gf)$ for the center of the universal enveloping algebra
$\U(\gf)$ of $\gf_\C$.  Recall that according to Harish-Chandra the characters
$\chi$ of $\Zc(\gf)$ are parametrized
by $\cf_\C^*/ W_\cf$ as follows.
For any positive system $S$ of $\Sigma_{\cf}$ we set
$$
\uf_{S}
:=\bigoplus_{\alpha\in S}\gf_{\C,\alpha},
$$
and write $\rho_{S}$  for half the trace of $\ad(\cf)$ on $\uf_{S}$.
Using the Poincaré-Birkhoff-Witt theorem we may decompose an element $Z\in\Zc(\gf)$ as
\begin{equation}\label{eq Z decomp}
Z
\in C_{S}+\uf_{-S}\,\U(\gf)\uf_{S}
\end{equation}
with $C_{S}\in \U(\cf)$, see the proof of  \cite[Lemma 8.17]{K}. The element $[\Lambda]\in\cf_\C^*/ W_\cf$
parametrizing $\chi$ is then given by
\begin{equation}\label{eq H-C iso}
\chi(Z)
=(\Lambda-\rho_{S})(C_{S})
\end{equation}
and does not depend on the choice of $S$.

\par Every irreducible Harish-Chandra module $V$ admits an infinitesimal character
$\chi_V:\Zc(\gf) \to \C$ which then corresponds to a $W_\cf$-orbit
$$
[\Lambda_V]
:=W_\cf\cdot \Lambda_V
$$
for some $\Lambda_V\in \cf_\C^*$.
The following lemma is standard. For convenience we include its short proof.

\begin{lemma} \label{Lemma infinitesimal}
Let $V$ be an irreducible Harish-Chandra module.  The following hold.
 \begin{enumerate}
 \item\label{Lemma infinitesimal - item 1} $[\Lambda_{\tilde{V}}]=[-\Lambda_{V}]$, where $\tilde{V}$ is the contragredient of $V$.
 \item\label{Lemma infinitesimal - item 2} If $V$ is unitarizable, then $[\Lambda_V]=[-\sigma\Lambda_V]$.
\end{enumerate}
\end{lemma}

\begin{proof}
Let $Z\mapsto Z^\vee$ denote the principal anti-automorphism of $\U(\gf)$.
Then $\chi_{\tilde V}(Z)=\chi_V(Z^\vee)$ for $Z\in\Zc(\gf)$. Let $S$ be any positive system of $\Sigma_{\cf}$. Let $Z\in\Zc(\gf)$ and let $C_{S}\in \U(\cf)$ be as in (\ref{eq Z decomp}).
By \eqref{eq H-C iso}
$$
\chi_{\tilde{V}}(Z)
=(\Lambda_{\tilde{V}}-\rho_{S})(C_{S}).
$$
As
$$
Z^{\vee}
\in C_{S}^{\vee}+\uf_{S}\,\U(\gf)\uf_{-S},
$$
we have
$$
\chi_{V}(Z^{\vee})
=(\Lambda_{V}-\rho_{-S})(C_{S}^{\vee})
=(-\Lambda_{V}-\rho_{S})(C_{S}).
$$
This proves (\ref{Lemma infinitesimal - item 1}).

The conjugate representation $\oline V$  of $V$ has infinitesimal character $[\Lambda_{\oline V}]= [\sigma(\Lambda_V)]$. If $V$ is unitarizable, then the representation is isomorphic to its conjugate dual, hence assertion (\ref{Lemma infinitesimal - item 2}).
\end{proof}

We recall that an element $\lambda\in \cf_\C^*$ is {\it regular} provided that the stabilizer of $\lambda$ in
$W_\cf$ is trivial.  Notice that the complex conjugation $\sigma$ and $-\id$ induce
automorphisms of $\Sigma_\cf$, i.e., they determine elements of $\Aut(\Sigma_\cf)$.
In particular
$-\sigma\in \Aut(\Sigma_\cf)$. We define the extended Weyl group of $W_\cf$ as the following subgroup of $\Aut(\Sigma_\cf)$:
$$W_{\cf, \rm ext}:=\la W_\cf, -\sigma\ra_{\text{group}} \subset \Aut(\Sigma_\cf)\, .$$
Furthermore $\lambda \in \cf_\C^*$ is called {\it strongly regular} in case
the stabilizer in $W_{\cf , \text{ext}}$ is trivial.

According to Harish-Chandra (see \cite[Theorem 16]{HC2}) the infinitesimal characters of representations of the discrete series $V$
of $G$ are real, i.e., $\Lambda_{V}\in \cf_\R^*/W_\cf$.  A simplified proof of this
fact was recently given in the more general context of real spherical spaces, see \cite[Theorem 1.1]{KKOS}.

\begin{prop} \label{prop sigma-Weyl}Assume that there exists a representation $V$ of the discrete series for $G$ with
infinitesimal character $[\Lambda]\in \cf_\C^*/ W_{\cf}$.  Then the following assertions hold:
\begin{enumerate}
\item $\Lambda\in \cf_{\R}^{*}$ and there exists an element
$w\in W_\cf$ such that $w \cdot \Lambda = -\sigma(\Lambda)$.
\item If in addition $\Lambda$ is strongly regular, then there exists an element $w\in W_\cf$ such that
$w=-\sigma$ on $\cf_\R^*$.  In particular, $-\sigma|_{\cf_\R^*}\in W_\cf\subset \Aut(\cf_\R^*)$.
\end{enumerate}
\end{prop}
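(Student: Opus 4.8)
The plan is to handle the two parts in sequence: first extract (1) from unitarity via Lemma \ref{Lemma infinitesimal} together with the cited reality of the infinitesimal character, and then feed (1) into the strong regularity hypothesis to obtain (2). The proof should be entirely formal once the reality statement is granted.

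For part (1), the reality $\Lambda\in\cf_\R^*$ is precisely Harish-Chandra's theorem on discrete series, equivalently \cite[Theorem 1.1]{KKOS}, so nothing new is needed there. For the Weyl relation I would use that a representation of the discrete series is unitarizable, so its Harish-Chandra module is irreducible and unitarizable and Lemma \ref{Lemma infinitesimal} applies, giving $[\Lambda]=[-\sigma\Lambda]$. This says that $-\sigma(\Lambda)$ and $\Lambda$ lie in a common $W_\cf$-orbit, and unwinding the equality of orbits produces $w\in W_\cf$ with $w\cdot\Lambda=-\sigma(\Lambda)$.

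For part (2), the idea is that strong regularity promotes the orbit relation to an identity of transformations. Starting from $w\cdot\Lambda=-\sigma(\Lambda)$ I would rewrite $w^{-1}(-\sigma)\Lambda=\Lambda$, so that $w^{-1}\circ(-\sigma)$ is an element of $W_{\cf,\text{ext}}$ stabilizing $\Lambda$. Since $\Lambda$ is strongly regular this stabilizer is trivial, whence $w^{-1}(-\sigma)=\id$, that is $-\sigma=w$ inside $W_{\cf,\text{ext}}$. This is exactly the assertion that $-\sigma$ restricted to $\cf_\R^*$ coincides with a Weyl group element, hence lies in $W_\cf$.

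The step I expect to require the most care is interpreting the equality $-\sigma=w$ correctly. It holds a priori inside $W_{\cf,\text{ext}}\subset\Aut(\Sigma_\cf)$, hence as linear maps on $\Span_\R(\Sigma_\cf)$, whereas the assertion $-\sigma|_{\cf_\R^*}\in W_\cf$ concerns all of $\cf_\R^*$; one must therefore also match the two maps on the complementary central directions, on which $w$ acts trivially. The natural fix is to read the stabilizer condition defining strong regularity as referring to the genuine linear action of $W_{\cf,\text{ext}}$ on $\cf_\R^*$, so that $w^{-1}(-\sigma)=\id$ holds on the whole space and the conclusion is immediate; one can corroborate this by noting that $w\cdot\Lambda=-\sigma(\Lambda)$ already forces $-\sigma$ to fix the central component of $\Lambda$, in harmony with $-\sigma|_{\cf_\R}=\theta|_{\cf_\R}$. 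This central bookkeeping is the only point at which the reductive rather than semisimple setting intervenes; everything else is purely formal.
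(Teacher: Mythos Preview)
Your proposal is correct and matches the paper's own argument essentially verbatim: the paper also invokes the cited reality result for $\Lambda$, applies Lemma~\ref{Lemma infinitesimal} via unitarizability to get $[-\sigma\Lambda]=[\Lambda]$, and then simply declares that the second assertion ``is a consequence thereof,'' which is exactly your stabilizer computation $w^{-1}(-\sigma)\Lambda=\Lambda$ combined with strong regularity. Your final paragraph about the central directions is more scrupulous than the paper itself, which does not comment on this point; your suggested reading of strong regularity (as a condition on the action on all of $\cf_\R^*$) is the intended one, and is consistent with how the paper later uses the conclusion in Proposition~\ref{prop equivalences to theta inner}.
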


\begin{proof} As mentioned above,  $\Lambda\in\cf_{\R}^{*}$. Since representations of the discrete series are also unitarizable,
Lemma \ref{Lemma infinitesimal}  gives $[-\sigma \Lambda]= [\Lambda]$.  This shows the
first assertion and the second is a consequence thereof.
\end{proof}

We recall that $W_\cf= N_{G_{\C}}(\cf_{\C})/C$
and $W_\af= N_K(\af)/M$.
We denote by $W_\cf^\theta$ the subgroup of $W_{\cf}$ consisting of the elements which commute with $\theta$, and recall the exact sequence

\begin{equation} \label{exact}  1\to W_\mf \to W_\cf^\theta \to W_\af\to 1\end{equation}
where $W_\mf$ is the Weyl group of the root system $\Sigma_\mf:= \Sigma(\mf_\C,  \tf_\C)$, which can be realized as $N_M(\tf)/T$.

\begin{lemma}\label{lemma tau diagonal -> inner}
Let $\tau$ be an automorphism of $\gf_{\C}$ and $J_{\C}$ a Cartan subgroup of $G_{\C}$. If $\tau$ acts trivially on $\jf_{\C}$, then there exists a $t\in J_{\C}$ so that $\tau=\Ad(t)$.
\end{lemma}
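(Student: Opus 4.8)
The plan is to use that an automorphism fixing a Cartan subalgebra pointwise must preserve the root space decomposition, acting on each root space by a scalar, and then to realize the resulting system of scalars as the adjoint action of a single torus element. I would begin from the decomposition $\gf_\C = \jf_\C \oplus \bigoplus_{\alpha \in \Sigma} \gf_\C^\alpha$ relative to $\Sigma := \Sigma(\gf_\C, \jf_\C)$. Since $\tau$ fixes $\jf_\C$ pointwise, for $H \in \jf_\C$ and $X \in \gf_\C^\alpha$ one has $[H, \tau X] = \tau[H,X] = \alpha(H)\,\tau X$, so $\tau$ preserves each one-dimensional space $\gf_\C^\alpha$. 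Write $\tau|_{\gf_\C^\alpha} = c_\alpha \in \C^\times$.

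The next step is to show that $\alpha \mapsto c_\alpha$ is multiplicative. Applying $\tau$ to $[X_\alpha, X_\beta] \in \gf_\C^{\alpha+\beta}$ for roots with $\alpha+\beta \in \Sigma$ and $[\gf_\C^\alpha, \gf_\C^\beta] \neq 0$ gives $c_{\alpha+\beta} = c_\alpha c_\beta$, while applying $\tau$ to $[X_\alpha, X_{-\alpha}] \in \jf_\C$ together with $\tau|_{\jf_\C} = \id$ gives $c_\alpha c_{-\alpha} = 1$. Fixing a basis of simple roots and inducting on height, using that every positive root of height greater than one is the sum of a positive root and a simple root with nonvanishing bracket, these relations force $\alpha \mapsto c_\alpha$ to agree with the unique homomorphism $\chi \colon Q \to \C^\times$ on the root lattice $Q = \Z\Sigma$ determined by its values on the simple roots.

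Finally I would produce the element $t$. The roots lie in the character lattice $X^*(J_\C)$ of the connected complex torus $J_\C$ (all Cartan subgroups of $G_\C$ being conjugate to the connected torus $C_\C$), so $\chi$ is a homomorphism on the subgroup $Q \subseteq X^*(J_\C)$. As $\C^\times$ is divisible, hence an injective $\Z$-module, $\chi$ extends to a character $\tilde\chi \in \Hom(X^*(J_\C), \C^\times) = J_\C$, that is, to an element $t \in J_\C$ with $\alpha(t) = c_\alpha$ for every $\alpha \in \Sigma$. Then $\Ad(t)$ fixes $\jf_\C$ pointwise and acts on $\gf_\C^\alpha$ by $\alpha(t) = c_\alpha$, so $\Ad(t)$ and $\tau$ agree on every summand of the root space decomposition, whence $\tau = \Ad(t)$.

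The main obstacle is the middle step: one must check that the locally defined relations $c_{\alpha+\beta} = c_\alpha c_\beta$ genuinely assemble into a single homomorphism on the whole root lattice rather than leaving the scalars $c_\alpha$ independent, which is where the Chevalley structure of the bracket between root spaces is essential. The only other point requiring care is that the extension of $\chi$ from $Q$ to all of $X^*(J_\C)$ is available precisely because $\C^\times$ is divisible.
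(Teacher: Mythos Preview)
Your proof is correct and follows essentially the same line as the paper's: both observe that $\tau$ acts by scalars $c_\alpha$ on the root spaces and then realize this system of scalars as $\Ad(t)$ for some $t\in J_\C$. The paper organizes the endgame slightly differently---it first picks $t$ so that $\Ad(t)$ matches $\tau$ on the \emph{simple} root spaces and then checks directly that $\phi:=\Ad(t)^{-1}\tau$ is the identity on the remaining (negative) root spaces using $[E_\gamma,F_\gamma]\in\jf_\C$---whereas you package the scalars as a homomorphism $Q\to\C^\times$ and extend to $X^*(J_\C)$ via divisibility of $\C^\times$; both routes amount to the same argument.
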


\begin{proof}
Since $\tau$ acts trivially on $\jf_{\C}$, it preserves all root spaces $\gf_\C^\gamma$, $\gamma \in \Sigma_\jf$.
Hence there exists for all $\gamma\in \Sigma_\jf$ numbers $c_\gamma\in\C$ such that
$\tau|_{\gf_\C^\gamma}= c_\gamma\cdot  \id_{\gf_\C^\gamma}$.
Let now $t\in  J_{\C}$ be such that $\Ad(t)$ coincides with
$\tau$ on all simple root spaces $\gf_\C^\gamma$, $\gamma\in \Pi_\jf$.
Now $\phi:=\Ad(t)^{-1}\circ \tau$
is an automorphism of $\gf_\C$ which acts trivially on ${\mathfrak b}_\C=\jf_\C +\bigoplus_{\gamma\in\Sigma_\jf^+} \gf_\C^\gamma$
and leaves all other $\gf_\C^{-\gamma}$, $\gamma\in \Sigma_{\jf}^+$, invariant.
In fact, $\phi$ acts trivially on all negative root spaces.  To see this, let $\gamma\in \Sigma_{\jf}^+$ and
$0\neq E_\gamma\in \gf_\C^{\gamma}$ and $0\neq F_\gamma\in \gf_\C^{-\gamma}$.
Then $0\neq [E_\gamma, F_\gamma]\in \jf_\C$. As $\phi$ acts trivially on $\jf_{\C}$ we have
$$
[E_{\gamma},F_{\gamma}]
=\phi[E_{\gamma},F_{\gamma}]
=[E_{\gamma},\phi F_{\gamma}],
$$
and hence $\phi F_{\gamma}=F_{\gamma}$.  It follows that $\tau=\Ad(t)$.
\end{proof}

\begin{prop}\label{prop equivalences to theta inner}
 The following assertions are equivalent:
\begin{enumerate}
\item \label{eins}  $-\sigma|_{\cf_\R} \in W_\cf$.
\item \label{zwei} $\theta|_{\cf_\C} \in W_\cf$.
\item \label{three} $\theta$ is an inner automorphism of $\gf_{\C}$.
\item\label{four} There exists a $g\in U$ such that $\theta=\Ad(g)$ as an automorphism of $\gf_{\C}$.
\end{enumerate}
\end{prop}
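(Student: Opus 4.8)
The plan is to run the equivalences around the chain $(1)\Leftrightarrow(2)\Leftrightarrow(3)$ and then to treat $(3)\Leftrightarrow(4)$ separately, the only nontrivial half of the last being $(3)\Rightarrow(4)$. I would first record the basic structural fact that $\theta$ preserves $\cf=\af+\tf$, since $\theta|_{\af}=-\id$ and $\theta|_{\tf}=\id$; in particular $\theta|_{\cf_\C}$ is a $\C$-linear automorphism of $\cf_\C$ preserving the real form $\cf_\R$.

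For $(1)\Leftrightarrow(2)$ I would use that both $\theta|_{\cf_\C}$ and every $w\in W_\cf$ are $\C$-linear automorphisms of $\cf_\C$ preserving $\cf_\R$, and that such a map is determined by its restriction to the real form $\cf_\R$. By the identity $\sigma|_{\cf_\R}=-\theta|_{\cf_\R}$ recorded earlier we have $-\sigma|_{\cf_\R}=\theta|_{\cf_\R}$, so the existence of $w\in W_\cf$ with $w|_{\cf_\R}=-\sigma|_{\cf_\R}$ is the same as the existence of $w\in W_\cf$ with $w|_{\cf_\C}=\theta|_{\cf_\C}$. This is routine.

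For $(2)\Leftrightarrow(3)$ the key tool is Lemma~\ref{lemma tau diagonal -> inner}. The implication $(3)\Rightarrow(2)$ is immediate: if $\theta=\Ad(g)$ with $g\in G_{\C}$, then $\Ad(g)$ preserves $\cf_\C$, so $g\in N_{G_{\C}}(\cf_\C)$ and $\theta|_{\cf_\C}\in W_\cf$. For $(2)\Rightarrow(3)$ I would choose $n\in N_{G_{\C}}(\cf_\C)$ representing $\theta|_{\cf_\C}\in W_\cf$, so that $\Ad(n)^{-1}\circ\theta$ is an automorphism of $\gf_\C$ acting trivially on $\cf_\C$; Lemma~\ref{lemma tau diagonal -> inner}, applied with the Cartan subgroup $C_{\C}$, then gives $t\in C_{\C}$ with $\Ad(n)^{-1}\circ\theta=\Ad(t)$, whence $\theta=\Ad(nt)$ is inner.

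Finally, $(4)\Rightarrow(3)$ holds trivially because $U\subset G_{\C}$, and $(3)\Rightarrow(4)$ is where the real work lies. The plan is to pass the implementing element $g$ into the maximal compact subgroup $U$ via the Cartan decomposition $G_{\C}=U\exp(i\uf)$ of the reductive complex group, where $\uf=\Lie(U)=\kf+i\sf$. Writing $g=u\exp(iX)$ with $u\in U$ and $X\in\uf$, I would observe that $\theta$ preserves $\uf$ (since $\theta$ preserves $\kf$ and $\sf$), so that $\Ad(\exp(iX))=\Ad(u)^{-1}\circ\theta$ preserves $\uf$ as well, and hence commutes with the conjugation $\Theta=\theta\circ\sigma$ whose fixed-point set is $\uf$. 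As $\Theta(i\ad X)\Theta^{-1}=-i\ad X$, commuting with $\Theta$ forces $\exp(i\ad X)=\exp(-i\ad X)$, i.e.\ $\exp(2i\ad X)=\id$ on $\gf_\C$. Because $X\in\uf$ the operator $\ad X$ is semisimple with purely imaginary eigenvalues, so $\exp(2i\ad X)=\id$ can only hold if $\ad X=0$; then $\Ad(\exp(iX))=\id$ and $\theta=\Ad(u)$ with $u\in U$, as desired. The main obstacle is precisely this descent of the implementing element into the compact form $U$, whose heart is the elementary eigenvalue argument showing that the noncompact factor $\exp(iX)$ must act trivially.
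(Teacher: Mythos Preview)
Your proof is correct. The equivalences $(1)\Leftrightarrow(2)$ and $(3)\Rightarrow(2)$ match the paper, and your $(2)\Rightarrow(3)$ via an arbitrary representative $n\in N_{G_\C}(\cf_\C)$ together with Lemma~\ref{lemma tau diagonal -> inner} is the natural simplification once one does not insist on landing in $U$ in a single step.

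The genuine difference is in how one reaches $(4)$. The paper proves $(2)\Rightarrow(4)$ directly: it uses the exact sequence $1\to W_\mf\to W_\cf^\theta\to W_\af\to 1$ to realize $\theta|_{\cf_\C}$ by an element $w=km\in K$, applies Lemma~\ref{lemma tau diagonal -> inner} to $\tau=\theta\circ\Ad(w)$, and then argues that since $\theta$ commutes with $\Ad(w)$ the cyclic group $\langle\tau\rangle$ is relatively compact in $\Ad(C_\C)$, forcing the torus element $t$ into $C_U$ and giving $g=tw^{-1}\in U$. Your route instead establishes $(3)\Rightarrow(4)$ globally: starting from any $g\in G_\C$ with $\theta=\Ad(g)$, you use the Cartan decomposition $G_\C=U\exp(i\uf)$ and show via the eigenvalue argument that the noncompact factor acts trivially. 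The paper's argument is more constructive (it exhibits $g$ in terms of Weyl-group and torus data) and stays within the maximal torus; yours is cleaner and more conceptual, relying only on the standard fact that $\ad X$ has purely imaginary spectrum for $X\in\uf$, and it avoids the somewhat delicate relative-compactness step for $\langle\tau\rangle$.
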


\begin{proof} Since $-\sigma$ and $\theta$ coincide on $\cf_\R$, the equivalence
of \eqref{eins} and \eqref{zwei} is clear.

\par Suppose now that \eqref{zwei} holds.
Since $\theta\big|_{\af}\in W_{\af}$  there exists a $k\in N_{K}(\af)$ so that $\theta\big|_{\af}=\Ad(k)\big|_{\af}$. Since $N_{K}(\af)\subseteq N_{K}(M)$, the restriction of $\Ad(k)^{-1}\theta$ to $\cf_{\R}$ defines an element of $W_{\cf}$ whose restriction to $\af$ is trivial. In view of \eqref{exact} $\Ad(k)^{-1}\theta$ defines an element of $W_{\mf}$, and thus there exists an $m\in M$ so that $ \Ad(k)^{-1}\theta\big|_{i\tf}=\Ad(m)\big|_{i \tf}$. Now $\Ad(km)$ and $\theta$ coincide on $\cf_{\R}$. Let $w=km$.

\par Let $\tau=\theta\circ \Ad(w)$. Since $\tau$ is an automorphism of $\gf_{\C}$ with $\tau|_{\cf_\C}= \id_{\cf_\C}$, it follows from Lemma \ref{lemma tau diagonal -> inner} that there exists a $t\in C_{\C}$ so that $\tau=\Ad(t)$.
Since $\theta$ commutes with $\Ad(w)$ (as $w$ in $K$) we have $\tau^2 \in \Ad(K)$. Hence $\langle\tau\rangle=\langle\tau^{2}\rangle\cup\tau\langle\tau^{2}\rangle$ is a relatively compact subgroup of $\Ad(C_\C)$. Consequently we see that $t$ can in fact be chosen in $C_{U}$.  It follows $\theta = \Ad(t w^{-1})$ with $g:= t w^{-1}\in U$. This proves (\ref{four}).

The implication of (\ref{three}) from (\ref{four}) is trivial.

Finally, if (\ref{three}) holds, then there exists a $g\in G_{\C}$ so that $\theta=\Ad(g)$. Since $\theta$ preserves the Cartan subalgebra $\cf_{\R}$, we have $g\in N_{G_{\C}}(\cf_{\R})$. Therefore, $\theta|_{\cf_\C}=\Ad(g)|_{\cf_\C}\in W_{\cf}$.  This proves (\ref{zwei}).
\end{proof}

The following statement can also be found in \cite[Lemma 1.6]{A}.

\begin{cor}\label{cor theta inner -> cpt Cartan}
The Cartan involution $\theta$ is an inner automorphism of $\gf_{\C}$ if and only if $\kf\subset \gf$ is a reductive subalgebra
of maximal rank. In that case $\gf$ admits a compact Cartan subalgebra.
\end{cor}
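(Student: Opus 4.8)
The plan is to show that the three conditions ``$\theta$ is inner on $\gf_\C$'', ``$\rank\kf=\rank\gf$'', and ``$\gf$ admits a compact Cartan subalgebra'' are mutually equivalent, the two genuine inputs being Proposition \ref{prop equivalences to theta inner} and Lemma \ref{lemma tau diagonal -> inner}. Throughout I would use the elementary inequality $\rank\kf\le\rank\gf$, which holds because a maximal torus $\tf_\kf$ of the compact Lie algebra $\kf$ consists of semisimple elements of $\gf$, hence is a toral subalgebra of $\gf$ and so is contained in some Cartan subalgebra of $\gf$.

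First I would treat the implication ``$\rank\kf=\rank\gf$ $\Rightarrow$ compact Cartan $\Rightarrow$ $\theta$ inner''. If $\rank\kf=\rank\gf$, pick a maximal torus $\tf_\kf\subset\kf$; then $\dim\tf_\kf=\rank\gf$, so the toral subalgebra $\tf_\kf$ is already maximal toral in $\gf$, i.e.\ a Cartan subalgebra of $\gf$ contained in $\kf$ --- a compact Cartan subalgebra. This settles the ``in that case'' clause. Since $\tf_\kf\subset\kf$, the involution $\theta$ acts trivially on $(\tf_\kf)_\C$, and Lemma \ref{lemma tau diagonal -> inner} (applied to $\tau=\theta$ and the Cartan subgroup with Lie algebra $(\tf_\kf)_\C$) yields $\theta=\Ad(t)$ for a suitable $t$; in particular $\theta$ is inner.

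For the converse I would start from Proposition \ref{prop equivalences to theta inner}, whose assertion (4) furnishes $g\in U$ with $\theta=\Ad(g)$ on $\gf_\C$; here it is essential that $g$ lies in the \emph{compact} group $U$, with $\Lie(U)=\kf+i\sf$. Choose a maximal torus $S\subset U$ containing $g$ and set $\sf_0=\Lie(S)$, a maximal torus of $\Lie(U)$, so that $(\sf_0)_\C$ is a Cartan subalgebra of $\gf_\C$ and $\dim_\R\sf_0=\rank\gf$. As $g\in S$, the map $\Ad(g)=\theta$ fixes $\sf_0$ pointwise, whence $\sf_0\subset\gf_\C^\theta\cap\Lie(U)=\kf_\C\cap(\kf+i\sf)=\kf$. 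Thus $\sf_0$ is a toral subalgebra of $\kf$ of dimension $\rank\gf$, forcing $\rank\kf\ge\rank\gf$ and hence $\rank\kf=\rank\gf$ (and $\sf_0$ is once more a compact Cartan subalgebra).

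The one genuinely substantive point is this converse direction: the non-formal step is to pass from ``$\theta$ is conjugation by an element of the compact group $U$'' to a full-rank toral subalgebra of $\kf$, which is exactly what embedding $g$ into a maximal torus of $U$ accomplishes. The remaining verifications --- that $\gf_\C^\theta=\kf_\C$, that $\kf_\C\cap(\kf+i\sf)=\kf$, and that a toral subalgebra of dimension $\rank\gf$ is a Cartan subalgebra --- are routine, so it is precisely the compact realization in Proposition \ref{prop equivalences to theta inner}(4) that carries the argument.
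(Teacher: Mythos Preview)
Your proof is correct and follows the same overall architecture as the paper's: the direction ``$\rank\kf=\rank\gf\Rightarrow\theta$ inner'' is handled identically, via Lemma~\ref{lemma tau diagonal -> inner} applied to a Cartan subalgebra sitting inside $\kf$. For the converse, both you and the paper invoke Proposition~\ref{prop equivalences to theta inner}(4) to obtain $g\in U$ with $\theta=\Ad(g)$, but then diverge slightly. The paper argues that $g$ is semisimple, identifies $K_\C=G_\C^{\theta}$ with the centralizer $Z_{G_\C}(g)$, and appeals to the algebraic-group fact that the centralizer of a semisimple element contains a maximal torus of $G_\C$. You instead embed $g$ in a maximal torus $S$ of the compact group $U$ and observe that $\Lie(S)$ is pointwise $\theta$-fixed, hence lands in $\kf$. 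Your route is a touch more elementary (relying only on the maximal-torus theorem for compact connected groups rather than on centralizers of semisimple elements in reductive groups) and has the bonus of exhibiting the compact Cartan subalgebra explicitly as $\Lie(S)$; the paper's route is shorter to state once the centralizer fact is granted. Either way the substantive input is the same: the compactness of $U$ furnished by Proposition~\ref{prop equivalences to theta inner}(4).
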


\begin{proof}
Assume that $\theta$ is an inner automorphism of $\gf_{\C}$.  By Proposition \ref{prop equivalences to theta inner} there exists a $g\in U$ so that $\Ad(g)=\theta$. As $g$ is semi-simple, the group
$K_\C:=G_{\C}^{\theta}$ is equal to $Z_{G_\C}(g)$. The centralizer of a semi-simple element contains a maximal torus of $G_\C$, and therefore,
$\rank K_\C = \rank G_\C$.

If $\kf$ is reductive of maximal rank, then there exists a Cartan subalgebra $\hf$ of $\gf$ in $\kf$. The Cartan involution $\theta$ acts trivially on $\hf$. Now Lemma \ref{lemma tau diagonal -> inner} is applicable to $\tau=\theta$  and $\jf_{\C}=\hf_{\C}$. It follows that $\theta$ is inner.
\end{proof}

\begin{cor}\label{cor sr implies elliptic}  Suppose that there exists a representation of the discrete series for $G$ with
strongly regular infinitesimal character. Then $G$ admits a compact Cartan subgroup.
\end{cor}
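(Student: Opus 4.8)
The plan is to read off the conclusion directly from the chain of results already established, so the work is almost entirely bookkeeping between the three preceding statements. Suppose $V$ is a representation of the discrete series for $G$ whose infinitesimal character $[\Lambda]\in\cf_\C^*/W_\cf$ is strongly regular. Since $\Lambda$ is then in particular strongly regular, the second assertion of Proposition \ref{prop sigma-Weyl} applies and produces an element $w\in W_\cf$ with $w=-\sigma$ on $\cf_\R^*$; equivalently, $-\sigma|_{\cf_\R^*}\in W_\cf$ as an element of $\Aut(\cf_\R^*)$.

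Next I would transport this statement from the dual space $\cf_\R^*$ back to $\cf_\R$, in order to meet the hypothesis of Proposition \ref{prop equivalences to theta inner}. The restriction of the Killing form to $\cf_\R=\af+i\tf$ is positive definite and invariant under both $W_\cf$ and $\sigma$; it therefore furnishes an isomorphism $\cf_\R\xrightarrow{\ \sim\ }\cf_\R^*$ that intertwines the two $W_\cf$-actions and carries $-\sigma|_{\cf_\R}$ to $-\sigma|_{\cf_\R^*}$. Consequently $-\sigma|_{\cf_\R^*}\in W_\cf$ is equivalent to $-\sigma|_{\cf_\R}\in W_\cf$, which is exactly assertion \eqref{eins} of Proposition \ref{prop equivalences to theta inner}. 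Invoking the equivalence of assertions \eqref{eins} and \eqref{three} there, I conclude that $\theta$ is an inner automorphism of $\gf_\C$.

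Now Corollary \ref{cor theta inner -> cpt Cartan} applies: since $\theta$ is inner, $\kf$ is a subalgebra of maximal rank and $\gf$ admits a compact Cartan subalgebra $\hf\subset\kf$. It remains to pass from the Lie algebra to the group level. The associated Cartan subgroup is $Z_G(\hf)$, a $\theta$-stable reductive subgroup whose Lie algebra is the self-centralizing subalgebra $\hf$. Because $\hf\subset\kf$ we have $\hf\cap\sf=0$, so in the $\theta$-stable Cartan decomposition of $Z_G(\hf)$ the vector part $\exp(\hf\cap\sf)$ is trivial; hence $Z_G(\hf)=Z_G(\hf)\cap K$ is compact, exhibiting a compact Cartan subgroup of $G$.

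The only two points requiring care are the dual-versus-space bookkeeping in the second paragraph — which is harmless once the Killing-form identification is fixed — and the final subalgebra-to-subgroup passage. The latter is the single step not already packaged in the preceding statements, but it is routine, amounting to the standard fact that a Cartan subalgebra contained in $\kf$ integrates to a compact Cartan subgroup. I do not anticipate any serious obstacle: the substance of the argument is carried entirely by Propositions \ref{prop sigma-Weyl} and \ref{prop equivalences to theta inner} together with Corollary \ref{cor theta inner -> cpt Cartan}.
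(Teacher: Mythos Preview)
Your proposal is correct and follows exactly the route the paper takes: it simply chains together Proposition \ref{prop sigma-Weyl}(2), Proposition \ref{prop equivalences to theta inner}, and Corollary \ref{cor theta inner -> cpt Cartan}, which is precisely what the paper's one-line proof does. The only additions you make are the routine Killing-form identification between $\cf_\R$ and $\cf_\R^*$ and the passage from a compact Cartan subalgebra to a compact Cartan subgroup, both of which the paper leaves implicit.
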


\begin{proof}
The assertion follows from Propositions \ref{prop sigma-Weyl} and \ref{prop equivalences to theta inner} and Corollary \ref{cor theta inner -> cpt Cartan}.
\end{proof}

\section{Power series expansion}

In this section we summarize a few basic facts regarding the power series expansions of
the matrix coefficients
of an irreducible Harish-Chandra module $V$. We denote the dual Harish-Chandra module of
$V$ by $\widetilde V$.  Recall that $\widetilde V$ is given by the $K$-finite vectors in the algebraic dual
$V^*$ of $V$.
As before we identify the infinitesimal character of $V$ with an $W_\cf$-orbit $[\Lambda_V]= W_\cf\cdot \Lambda_V \subset \cf_\C^*$.

\par Let us denote by $\af^{++}$ the positive Weyl chamber in $\af$ with respect to $\Sigma_\af^+$ and denote by
$\af^+$ the closure of $\af^{++}$. Likewise we set $A^{++}=\exp(\af^{++})$ and $A^+=\exp(\af^+)$.
As usual we denote by $\rho=\frac{1}{2} \sum_{\alpha\in \Sigma_\af^+} (\dim \gf^\alpha)\alpha\in \af^*$ the Weyl half sum.

Now given an irreducible  Harish-Chandra module $V$ each $K$-bi-finite matrix coefficient
$$G\ni g\mapsto  m_{v,\tilde v} (g):=\la \pi(g) v, \tilde v\ra$$
 for $v\in V$ and $\tilde v\in \widetilde V$ admits a power series expansion
on $A^{++}$, see \cite[Ch. VIII]{K}.  To be precise, we have
$$
m_{v,\tilde v}(a)
=\sum_{\xi\in [\Lambda_V]|_\af - \N_0[\Sigma_\af^+]}
p_{v, \tilde v}^{\xi}(\log a) \,a^{\xi- \rho}
\qquad (a\in A^{++},  v\in V, \tilde v\in \widetilde V)
$$
with unique polynomials $p_{v, \tilde v}^{\xi}$ on $\af$
which are of bounded degree and depend bilinearly on the pair $v, \tilde v$.
In case $V$ belongs to the  discrete series  only those elements $\xi$ contribute
for which $\re \xi|_{\af^+}$ is negative, i.e., $\re  \xi(X)<0$ for all $X\in \af^+\bs\{0\}$.

By definition, an element
$\xi\in [\Lambda_V]|_\af - \N_0[\Sigma_\af^+]$
is called an {\it exponent} of $V$ if $p_{v, \tilde v}^{\xi}\neq 0$
for some $v,\tilde v$. The maximal elements in the set of exponents with respect to
the ordering given by $\xi_1\succeq\xi_2$ if $\xi_1-\xi_2\in\N_0[\Sigma_\af^+]$
are called the {\it leading exponents}.
We denote by $\E_V\subset \af_\C^*$ the set of leading exponents and note that
by \cite[Theorem 8.33]{K} we have $\E_V\subseteq [\Lambda_V]|_{\af}$. Then
$$m_{v,\tilde v}(a) = \sum_{\xi\in \E_V- \N_0[\Sigma_\af^+]}
p_{v, \tilde v}^{\xi}(\log a) a^{\xi-\rho}
\qquad (a\in A^{++})\, .$$

The coefficients $p_{v,\tilde v}^\lambda$ for $\lambda\in\E_V$ determine the
principal asymptotics of the matrix coefficient in the sense that

$$
m_{v,\tilde v}(a)
= \sum_{\lambda\in \E_V}  p_{v,\tilde v}^\lambda(\log a) a^{\lambda- \rho}
+\hbox{lower order terms}\qquad (a\in A^{++})\, .
$$

The condition that $V$ belongs to the discrete series can be read off by its set of leading exponents. Let
$$\Cc:=(\af^+)^\star:=\{ \lambda\in \af^*\mid \lambda(X)\geq 0, \ X\in \af^+\}= \sum_{\alpha\in \Sigma^+} \R_{\geq 0}\alpha$$
be the dual Weyl chamber.  By \cite[Theorem 8.48]{K}  $V$ belongs to the discrete series if and only if it satisfies the condition
\begin{equation}\label{L^2-condition}
\re \E_V
\subset - \interior \Cc.
\end{equation}

\begin{lemma} \label{lemma tensor}
Let  $F=F_\mu$ be a finite dimensional representation of $G$ with highest weight $\mu$ with respect to $\Sigma_\cf^+$ and let $V$ be a Harish-Chandra module of the discrete series. The following are equivalent:
\begin{enumerate}
\item $\re\mu|_{\af}+\re\E_V  \subset - \interior \Cc$.
\item All matrix coefficients of $V\otimes F_\mu$ are contained in $L^2(G)$.
\end{enumerate}
\end{lemma}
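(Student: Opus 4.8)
The plan is to reduce the claim to the square-integrability criterion \eqref{L^2-condition}, now applied to the (no longer irreducible) Harish-Chandra module $V\otimes F_\mu$, and to match the two conditions by computing the leading exponents of $V\otimes F_\mu$ in terms of $\E_V$ and $\mu$. The starting observation is that the dual of $V\otimes F_\mu$ is $\widetilde V\otimes\widetilde{F}_\mu$, and that a matrix coefficient of $V\otimes F_\mu$ is a finite sum of products $m_{v,\tilde v}(g)\,m^{F}_{f,\tilde f}(g)$, where $m_{v,\tilde v}$ is a matrix coefficient of $V$ and $m^{F}_{f,\tilde f}(g)=\langle\pi_\mu(g)f,\tilde f\rangle$ one of $F_\mu$. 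Hence the asymptotics of the matrix coefficients of $V\otimes F_\mu$ on $A^{++}$ are obtained by multiplying the two expansions.

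First I would record the expansion of the finite-dimensional factor. Decomposing $f$ and $\tilde f$ into $\af$-weight vectors gives $m^{F}_{f,\tilde v}(a)=\sum_{w} c_w\, a^{w}$, the sum running over the $\af$-weights $w$ of $F_\mu$ with constants $c_w$ depending bilinearly on $f\otimes\tilde f$ and no logarithmic terms, since $\af$ acts semisimply on $F_\mu$. Because $\Sigma_\cf^+$ is compatible with $\Sigma_\af^+$, every $\cf$-weight of $F_\mu$ lies in $\mu-\N_0[\Sigma_\cf^+]$, and restriction to $\af$ sends positive roots into $\N_0[\Sigma_\af^+]\cup\{0\}$; thus every $\af$-weight satisfies $w\in\mu|_\af-\N_0[\Sigma_\af^+]$, so $\mu|_\af$ is the unique highest $\af$-weight, and its weight space is nonzero as it contains the $\cf$-highest weight space.

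Multiplying with the expansion of $m_{v,\tilde v}$ recalled above, every exponent of $V\otimes F_\mu$ has the form $(\lambda+\mu|_\af)-\eta$ with $\lambda\in\E_V$ and $\eta\in\N_0[\Sigma_\af^+]$. I would then argue that $\E_{V\otimes F_\mu}=\mu|_\af+\E_V$. Each $\lambda+\mu|_\af$ genuinely occurs: choosing $f,\tilde f$ in the top $\af$-weight space (so $c_{\mu|_\af}\neq 0$) and $v,\tilde v$ with $p^{\lambda}_{v,\tilde v}\neq 0$ produces the nonzero term $c_{\mu|_\af}\,p^{\lambda}_{v,\tilde v}(\log a)\,a^{\lambda+\mu|_\af-\rho}$; and it is maximal, for distinct leading exponents of $V$ are $\succeq$-incomparable, so no exponent $(\lambda'+\mu|_\af)-\eta$ can strictly dominate $\lambda+\mu|_\af$. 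Conversely every maximal exponent, being $\preceq$ the genuine exponent $\lambda'+\mu|_\af$ from which it arises, must equal one of these.

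It then remains to invoke \eqref{L^2-condition} for $V\otimes F_\mu$: all its matrix coefficients lie in $L^2(G)$ if and only if $\re\E_{V\otimes F_\mu}=\re\mu|_\af+\re\E_V\subset-\interior\Cc$, which is exactly the equivalence of (2) and (1). The implication (1)$\Rightarrow$(2) can in fact be seen directly, since every exponent has real part in $\re\mu|_\af+\re\E_V-\N_0[\Sigma_\af^+]\subset-\interior\Cc$, using $-\interior\Cc-\Cc\subset-\interior\Cc$. The delicate point is the backward direction, where one must ensure the top coefficient does not cancel — this is precisely why $f,\tilde f$ are placed in the highest $\af$-weight space — and where one needs the square-integrability criterion in the form valid for the finite-length, not necessarily irreducible, module $V\otimes F_\mu$. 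The proof of \eqref{L^2-condition} via the $KA^+K$-integration of $|m(a)|^2$ against the Jacobian $\asymp a^{2\rho}$ applies verbatim and shows that $L^2$-integrability is detected exactly by the leading exponents, so no semisimplicity of $V\otimes F_\mu$ is required.
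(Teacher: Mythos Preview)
Your proof is correct and follows essentially the same approach as the paper's: both factor matrix coefficients of $V\otimes F_\mu$ as products $m_{v,\tilde v}\,m^{F}_{f,\tilde f}$, deduce (1)$\Rightarrow$(2) from the fact that all $\af$-weights of $F_\mu$ lie in $\mu|_\af-\Cc$, and obtain (2)$\Rightarrow$(1) by placing $f,\tilde f$ in the highest $\af$-weight space. Your version is more explicit in identifying $\E_{V\otimes F_\mu}=\mu|_\af+\E_V$ and in addressing the applicability of the $L^2$-criterion \eqref{L^2-condition} to the non-irreducible module $V\otimes F_\mu$, points the paper leaves implicit.
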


\begin{proof}
If $v\otimes f\in V\otimes F_{\mu}$ and $\tilde{v}\otimes \tilde{f}\in\widetilde{V}\otimes F_{\mu}^{*}$, then
\begin{equation}\label{eq matrix coeff}
m_{v\otimes f,\tilde{v}\otimes\tilde{f}}
=m_{v,\tilde{v}}\,m_{f,\tilde{f}}.
\end{equation}
The assertion (1) $\Rightarrow$ (2) now follows from  \eqref{L^2-condition} as $\Spec_\af F_\mu \subset \mu|_\af - \N_0[\Sigma_\af^+]\subset \mu|_\af-\Cc$. The other implication follows immediately from (\ref{eq matrix coeff}) with suitable choices of $f$ and $\tilde{f}$.
\end{proof}

\section{Application of the translation principle}
For a Harish-Chandra module $V$ we denote by $H_0(\oline\nf, V)=V/\oline\nf V$ the finite dimensional
$\oline \nf$-homology of degree $0$, and recall that the covariant functor $H_0(\oline\nf, \,\cdot\,)$ is right exact. 
Notice that $H_0(\oline\nf, V)$ is a module for $MA$.
By the Harish-Chandra homomorphism
we have $\Zc(\mf)\simeq \U(\tf)^{W_\mf}$. Moreover we note $\Zc(\af +\mf)= \U(\af) \otimes \Zc(\mf)$. Therefore we can consider the spectrum of a finite dimensional $\Zc(\af+\mf)$-module as a $W_\mf$-invariant subset of
$\cf_\C^*$. In addition we consider $\rho$ as a $W_{\mf}$-invariant element of $\cf_{\C}^{*}$ by extending it trivially on $\tf$.

\begin{lemma}\label{Lemma Hp} Let $V$ be an irreducible Harish-Chandra module with infinitesimal character
$[\Lambda]$. Then the following assertions hold:
\begin{enumerate}
\item  \label{spec Hp} $\Spec_{\Zc(\af+\mf)} H_0(\oline \nf, V)\subset -\rho + [\Lambda]$.
\item\label{spec Hp3} $ \Spec_\af H_0(\oline\nf, V) \subset -\rho +\E_V - \N_0[\Sigma_\af^+]$.
\end{enumerate}
\end{lemma}

\begin{proof} For \eqref{spec Hp} see \cite[Cor. 3.32]{HS2}.
For the inclusion in \eqref{spec Hp3}, let $\lambda\in \Spec_\af H_0(\oline\nf, V)$. Recall that it follows from Casselman's version of Frobenius reciprocity that elements $\lambda\in \Spec_{\af} H_0(\oline \nf, V)$
correspond to embeddings of $V$ into a minimal principal series representation $\Ind_{\overline{P}}^{G}\big(\sigma\otimes(\lambda+\rho)\big)$ (see \cite{Casselman} or \cite[Theorem 4.9]{HS2}).
Without loss of generality, we may assume that $V\subset\Ind_{\overline{P}}^{G}\big(\sigma\otimes(\lambda+\rho)\big)$. As in the derivation of  \cite[(1.4)]{KKOS} one sees that $\lambda+\rho$ occurs as an exponent of $V$, and hence is contained in $\E_V -\N_0[\Sigma_\af^+]$.
\end{proof}

For the rest of this section we let $V$ be a Harish-Chandra module of the discrete series with infinitesimal character
$[\Lambda]= W_\cf\cdot \Lambda \in \cf_\C^*/ W_\cf$.  We set

$$
[\Lambda]^+
:=\{\nu\in [\Lambda]\mid \re\nu|_{\af}\in-\interior\Cc\}
=\{ \nu\in [\Lambda]\mid  \re \nu|_{\af^+\bs \{0\}}<0\}\, .
$$

\begin{lemma}
Let $V$ be a Harish-Chandra module of the discrete series with infinitesimal character $[\Lambda]$.
Then
\begin{equation} \label{Spec Hp3}
\Spec_{\Zc(\af+\mf)} H_0(\oline \nf, V)\subset -\rho + [\Lambda]^+ \, .
\end{equation}
\end{lemma}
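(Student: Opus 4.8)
The plan is to combine the three assertions of Lemma \ref{Lemma Hp} with the $L^2$-condition \eqref{L^2-condition} for the discrete series, reducing the whole statement to an elementary inclusion of cones in $\af^*$. The point is that assertion \eqref{spec Hp} already places the spectrum inside $-\rho+[\Lambda]$, so only the additional constraint $\re\nu|_{\af}\in-\interior\Cc$ defining $[\Lambda]^+$ remains to be checked, and this is a statement that lives entirely on $\af$.

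First I would fix a spectral value $s\in\Spec_{\Zc(\af+\mf)}H_p(\oline\nf,V)$. By assertion \eqref{spec Hp} of Lemma \ref{Lemma Hp} we may write $s=-\rho+\nu$ with $\nu\in[\Lambda]$, and it suffices to prove $\nu\in[\Lambda]^+$, i.e.\ $\re\nu|_{\af}\in-\interior\Cc$. The key compatibility is that the $\af$-part of the $\Zc(\af+\mf)$-spectrum is precisely $\Spec_\af$: since $\af$ commutes with $\mf$ we have $\U(\af)\subset\Zc(\af+\mf)$, and $\Zc(\af+\mf)=\U(\af)\otimes\Zc(\mf)$, so the $\af$-weight of $s$ is read off from its $\U(\af)$-component. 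As $\rho$ is supported on $\af$ (being extended trivially on $\tf$), restricting $s=-\rho+\nu$ to $\af$ gives $-\rho+\nu|_{\af}\in\Spec_\af H_p(\oline\nf,V)$.

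Next I would feed this into assertions \eqref{spec Hp2} and \eqref{spec Hp3}. For $p=0$ the second inclusion of \eqref{spec Hp3} yields $\Spec_\af H_0(\oline\nf,V)\subset-\rho+\E_V-\N_0[\Sigma_\af^+]$; for $p>0$, assertion \eqref{spec Hp2} combined with this same inclusion yields the identical bound. In either case $\nu|_{\af}\in\E_V-\N_0[\Sigma_\af^+]$, whence
\[
\re\nu|_{\af}\in\re\E_V-\N_0[\Sigma_\af^+].
\]
Finally I would invoke the discrete series hypothesis through \eqref{L^2-condition}, which gives $\re\E_V\subset-\interior\Cc$, together with $\N_0[\Sigma_\af^+]\subset\Cc$. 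The routine cone computation $-\interior\Cc-\Cc\subset-\interior\Cc$, valid because $\Cc$ is a convex cone and adding a cone element to an interior point remains interior, then forces $\re\nu|_{\af}\in-\interior\Cc$, so $\nu\in[\Lambda]^+$ and $s\in-\rho+[\Lambda]^+$. I do not expect a genuine obstacle here: all the analytic content is already packaged into Lemma \ref{Lemma Hp} and the $L^2$-criterion, and the only points requiring care are the identification of $\Spec_\af$ with the $\af$-restriction of $\Spec_{\Zc(\af+\mf)}$ and the elementary cone inclusion above.
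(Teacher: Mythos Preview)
Your proof is correct and follows essentially the same approach as the paper: combine assertions \eqref{spec Hp}, \eqref{spec Hp2}, \eqref{spec Hp3} of Lemma~\ref{Lemma Hp} with the $L^2$-condition \eqref{L^2-condition} and the cone inclusion $-\interior\Cc-\Cc\subset-\interior\Cc$. The only difference is cosmetic ordering---the paper first isolates the $p=0$ case and then invokes \eqref{spec Hp2} for $p>0$, whereas you treat all $p$ uniformly via the bound $\Spec_\af H_p(\oline\nf,V)\subset-\rho+\E_V-\N_0[\Sigma_\af^+]$---but the content is the same.
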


\begin{proof} Immediate from Lemma \ref{Lemma Hp}  and (\ref{L^2-condition}).
\end{proof}

We pick the representative $\Lambda\in[\Lambda]$
such that $\lambda:=\Lambda|_\af\in \E_V$.
In view of \cite{KKOS}, Theorem 1.1 and Remark 1.2(3), there exists an $N\in \N$, independent of the discrete series representation $V$, so that
$N\Lambda$ is integral. We select such an $N$ and set $\mu_0:=N\Lambda$. Let
$\mu$  be the unique dominant integral element in $W_\cf \cdot \mu_0$ and let $F_\mu$ be
the corresponding finite dimensional representation of $G$ with highest weight $\mu\in \cf_\R^*$.

We are interested in the $\Zc(\gf)$-isotypical decomposition of $V\otimes F_\mu$. Let $\chi_{\Lambda +\mu_{0}}:\Zc(\gf)\to\C$ be the character corresponding to $[\Lambda +\mu_{0}]$.
According to Zuckerman \cite[Theorem 1.2 (1)]{Z}  the element $[\Lambda+\mu_0]$ appears in $\Spec_{\Zc(\gf)} (V\otimes F_\mu)$
and thus the corresponding isotypical component
\begin{equation} \label{def U}
W
:=\{ v \in V\otimes F_\mu\mid  (\exists k\in \N) (\forall z\in \Zc(\gf))\ (z-\chi_{\Lambda +\mu_{0}} (z))^k \cdot v =0  \}
\end{equation}
is non-zero. Let $J\subset W$ be a maximal submodule and set $U:=W/J$. Then $U$ is an irreducible Harish-Chandra
module with infinitesimal character
$$[\Lambda_U]=[\Lambda+\mu_0] = [(N+1)\Lambda]\, .$$

\begin{lemma}\label{lemma Spec HP}  For any finite dimensional representation $F$ we have
$$ \Spec _{\Zc(\af+\mf)} H_0(\oline\nf, V\otimes F)\subset - \rho + [\Lambda]^+ + \Spec_{\Zc(\af+\mf)} F\, .$$
\end{lemma}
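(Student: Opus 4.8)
The plan is to reduce, by a filtration of $F$, to the case where $\oline\nf$ acts trivially, to compute the homology explicitly in that case, and finally to invoke the standard behaviour of $\Zc(\af+\mf)$-spectra under tensoring with a finite-dimensional module (the mechanism underlying the translation principle). First I would filter $F$ as a module for $\overline{\mathfrak p}:=\mf+\af+\oline\nf$ by the lower central series of $\oline\nf$,
$$F\supset \oline\nf F\supset \oline\nf^{2} F\supset\cdots\supset\oline\nf^{k}F=0,$$
which terminates because $\oline\nf$ acts nilpotently on the finite-dimensional module $F$. Since $\af+\mf$ normalises $\oline\nf$, each $\oline\nf^{i}F$ is an $\overline{\mathfrak p}$-submodule, and the successive quotients $Q_i:=\oline\nf^{i}F/\oline\nf^{i+1}F$ are $(\af+\mf)$-modules on which $\oline\nf$ acts trivially. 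As each $Q_i$ is an $(\af+\mf)$-subquotient of $F$, we have $\Spec_{\Zc(\af+\mf)}Q_i\subseteq \Spec_{\Zc(\af+\mf)}F$.

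Next, tensoring the short exact sequences $0\to\oline\nf^{i+1}F\to\oline\nf^{i}F\to Q_i\to 0$ with $V$ is exact and produces short exact sequences of $\overline{\mathfrak p}$-modules. The associated long exact sequences in $\oline\nf$-homology are $(\af+\mf)$-equivariant, so the $\Zc(\af+\mf)$-spectrum of $H_p(\oline\nf, V\otimes \oline\nf^{i}F)$ is contained in the union of the spectra of $H_p(\oline\nf, V\otimes \oline\nf^{i+1}F)$ and of $H_p(\oline\nf, V\otimes Q_i)$. A descending induction on $i$ then reduces the assertion to the trivial-$\oline\nf$ modules: it suffices to prove, for a finite-dimensional $(\af+\mf)$-module $Q$ with trivial $\oline\nf$-action, that
$$\Spec_{\Zc(\af+\mf)}H_p(\oline\nf, V\otimes Q)\subseteq -\rho+[\Lambda]^+ + \Spec_{\Zc(\af+\mf)}Q.$$

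When $\oline\nf$ acts trivially on $Q$, the Chevalley–Eilenberg complex computing $H_\bullet(\oline\nf,V\otimes Q)$ is $\big(\Lambda^{\bullet}\oline\nf\otimes V\big)\otimes Q$ with differential $\partial_V\otimes\id_Q$, where $\partial_V$ computes $H_\bullet(\oline\nf,V)$; this yields the $(\af+\mf)$-equivariant identification $H_p(\oline\nf, V\otimes Q)\cong H_p(\oline\nf, V)\otimes Q$ with the diagonal action. At this point I would combine the bound \eqref{Spec Hp3}, which gives $\Spec_{\Zc(\af+\mf)}H_p(\oline\nf,V)\subseteq -\rho+[\Lambda]^+$ for all $p$, with the standard description of how the $\Zc(\af+\mf)$-spectrum transforms under tensoring with the finite-dimensional $(\af+\mf)$-module $Q$: the $\af$-weights add, while the constituents of the tensor product have $\mf$-infinitesimal characters governed by Kostant's filtration. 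This gives $\Spec_{\Zc(\af+\mf)}\big(H_p(\oline\nf,V)\otimes Q\big)\subseteq \big(-\rho+[\Lambda]^+\big)+\Spec_{\Zc(\af+\mf)}Q$, and together with $\Spec_{\Zc(\af+\mf)}Q\subseteq\Spec_{\Zc(\af+\mf)}F$ this closes the argument.

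The main obstacle is precisely this last step of spectral bookkeeping: one must track the $\Zc(\af+\mf)$-spectrum — that is, the pairing of genuine $\af$-weights with $W_\mf$-orbits of $\mf$-infinitesimal characters — simultaneously through the long exact sequence and through the tensor product. The $\af$-direction is transparent because weights are additive, but the $\mf$-direction is delicate: there one needs the explicit form of Kostant's tensor filtration of $H_p(\oline\nf,V)\otimes Q$, and one must be careful that the resulting shift is expressed through the data attached to $F$ rather than by naively adding infinitesimal characters. Establishing that the $(\af+\mf)$-equivariant long exact sequence genuinely controls the spectrum, and that the tensor shift lands inside $-\rho+[\Lambda]^++\Spec_{\Zc(\af+\mf)}F$, is where I would spend the most care.
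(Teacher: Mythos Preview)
Your argument is essentially the paper's: filter $F$ as a $\oline P$-module so that $\oline\nf$ acts trivially on the graded pieces, invoke the K\"unneth identification $H_p(\oline\nf,V\otimes Q)\cong H_p(\oline\nf,V)\otimes Q$ together with \eqref{Spec Hp3}, and glue via the long exact homology sequence. The only cosmetic difference is that the paper refines all the way to a $\oline P$-composition series (so each quotient $F_k/F_{k-1}$ is an irreducible $MA$-module) rather than stopping at the $\oline\nf$-adic filtration you use; this makes the spectral bookkeeping you flag as delicate slightly more immediate, since each quotient then carries a single $\Zc(\af+\mf)$-eigenvalue, but otherwise the two arguments coincide.
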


\begin{proof} Filter $F$ as $\oline P$-module
as
$$ F_0=\{0\} \subsetneq F_1 \subsetneq\ldots \subsetneq F_n =F$$
such that $F_k/ F_{k-1}$ is an irreducible $\oline P$-module for each $1\leq k \leq n$. In particular,
each $F_k/ F_{k-1}$ is a trivial $\oline\nf$-module and thus
$H_0( \oline\nf, V \otimes F_k/ F_{k-1}) = H_0(\oline\nf, V) \otimes F_k/ F_{k-1}$ as
$MA$-modules.

We apply now $H_0$ to the exact sequence of $MA$-modules
$$
0\to V\otimes F_{k-1}\to V\otimes F_{k}\to V\otimes F_{k}/F_{k-1}\to0.
$$
and obtain the right exact sequence
$$
H_0(\oline\nf, V\otimes F_{k-1}) \to H_0(\oline\nf, V\otimes F_{k})\to H_0(\oline\nf, V)\otimes F_{k}/F_{k-1}\to0\, .$$
This implies
$$\Spec_{\Zc(\af+\mf)} H_0(\oline\nf, V\otimes F_{k})\subset \Spec_{\Zc(\af+\mf)} H_0(\oline\nf, V\otimes F_{k-1})\cup \Spec_{\Zc(\af+\mf)} (H_0(\oline\nf, V) \
\otimes F )$$
and the assertion follows by induction on $k$ and \eqref{Spec Hp3}.
\end{proof}

\begin{lemma}\label{lemma Lambda mu comp}
Let $\mu\in\cf_{\R}^{*}$ be dominant and integral and let $F_{\mu}$ be the highest weight representation with highest weight $\mu$. Let $\mu_{0}\in W_{\cf}\cdot \mu$ and let $\Lambda\in \R_{+}\mu_{0}$. Further, let $\nu\in [\Lambda]$, $\sigma\in\Spec_{\cf}F_{\mu}$ and  $w\in W_{\cf}$.
If $w(\Lambda+\mu_{0})=\nu+\sigma$, then $w\Lambda=\nu$ and $w\mu_{0}=\sigma$.
\end{lemma}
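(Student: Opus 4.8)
The plan is to exploit that $\Lambda$ is a positive multiple of the extreme weight $\mu_0$ and to run a length comparison with respect to a $W_\cf$-invariant inner product on $\cf_\R^*$. Concretely, the restriction of the Killing form $B$ to $\cf_\R=\af+i\tf$ is positive definite: it is positive definite on $\af\subset\sf$, and after the twist by $i$ it is positive definite on $i\tf$ (there it equals the negative definite form on $\tf\subset\kf$), while $\af\perp i\tf$ because $B(\sf,\kf)=0$. Hence $B$ induces a positive definite inner product $\la\cdot,\cdot\ra$ on $\cf_\R^*$ which is preserved by $W_\cf$ (acting by isometries). Write $\|\cdot\|$ for the associated norm and set $r:=\|\mu\|$.

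First I would record two elementary facts. Writing $\Lambda=c\mu_0$ with $c\in\R_+$, and assuming $c>0$ (the case $c=0$ forces $\nu=0$ and is trivial), the orbit $[\Lambda]=W_\cf\cdot\Lambda=c\,(W_\cf\cdot\mu)$ consists of vectors of norm $cr$, since $\mu_0\in W_\cf\cdot\mu$ and $W_\cf$ preserves $\|\cdot\|$; in particular $\|\nu\|=cr$ and $\|w\mu_0\|=r$. Second, since $\sigma\in\Spec_\cf F_\mu$ is a weight of the highest weight module $F_\mu$, one has $\|\sigma\|\le r$; this is the standard consequence of $\mu-\sigma\in\N_0[\Sigma_\cf^+]$ (reduce to the dominant $W_\cf$-representatives and expand $\|\mu\|^2-\|\sigma\|^2=\la\mu-\sigma,\mu+\sigma\ra\ge 0$).

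Now I would rewrite the hypothesis. Since $\Lambda+\mu_0=(c+1)\mu_0$, the equation $w(\Lambda+\mu_0)=\nu+\sigma$ reads $(c+1)\,w\mu_0=\nu+\sigma$, so $\sigma=(c+1)\,w\mu_0-\nu$. Expanding the norm and applying Cauchy--Schwarz $\la w\mu_0,\nu\ra\le\|w\mu_0\|\,\|\nu\|=cr^2$ gives
\[
\|\sigma\|^2=(c+1)^2 r^2-2(c+1)\la w\mu_0,\nu\ra+c^2 r^2\ \ge\ r^2\big[(c+1)^2-2c(c+1)+c^2\big]=r^2 .
\]
Combined with $\|\sigma\|\le r$, this forces $\|\sigma\|=r$ and equality in Cauchy--Schwarz. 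Because $\|\nu\|=c\|w\mu_0\|$ and $c>0$, equality means $\nu=c\,w\mu_0=w(c\mu_0)=w\Lambda$, which is the first claim. Substituting back into $(c+1)\,w\mu_0=\nu+\sigma=c\,w\mu_0+\sigma$ yields $\sigma=w\mu_0$, the second claim.

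I expect the only genuine work to lie in the two preparatory facts, namely the positive definiteness of the invariant form on $\cf_\R^*$ and the weight bound $\|\sigma\|\le\|\mu\|$, both of which are standard. The algebraic identity $(c+1)^2-2c(c+1)+c^2=1$ is exactly what makes the norm comparison close on the nose, so the single point demanding care is that $\Lambda$ and $\mu_0$, and hence $\nu$ and $w\mu_0$, are genuinely proportional; this is precisely the hypothesis $\Lambda\in\R_+\mu_0$, and it is what turns the Cauchy--Schwarz equality into the desired splitting.
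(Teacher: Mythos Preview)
Your proof is correct and follows essentially the same route as the paper's: both exploit the $W_\cf$-invariant norm on $\cf_\R^*$, the weight bound $\|\sigma\|\le\|\mu\|$, and the equality case of Cauchy--Schwarz to force the splitting $w\Lambda=\nu$, $w\mu_0=\sigma$. The paper is merely more terse, applying Cauchy--Schwarz to $\nu$ and $\sigma$ via the triangle inequality $\|\nu+\sigma\|\le\|\nu\|+\|\sigma\|$ rather than expanding $\|\sigma\|^2$ as you do, but the content is identical.
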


\begin{proof}
Let $r>0$ be so that $\mu_{0}=r\Lambda$.
We have $\sigma \in \Spec_\cf F_\mu \subset\operatorname{conv} (W_\cf\cdot \mu_0)$. In particular, $\|\sigma\|\leq\|\mu_{0}\|$. Moreover, $\|\nu\|=\|\Lambda\|$.
The Cauchy-Schwarz inequality applied to $\nu$ and $\sigma$ then gives that $\sigma=r\nu$.  It follows that $\sigma= w\mu_{0}$  and $\nu=w\Lambda$.
\end{proof}

For a Harish-Chandra module $U$ and infinitesimal character $[\Lambda_U]$ we define a
subset $[\Lambda_U]_\E\subset [\Lambda_U]$ by
$$
 [\Lambda_U]_\E:=\{ \Upsilon\in [\Lambda_U]\mid \Upsilon|_\af \in \E_U\}\, .
 $$

\begin{prop}\label{Prop U ds}
 For $U=W/J$ as defined after \eqref{def U} one has $[\Lambda_U]_\E\subset [\Lambda+\mu_0]^+$. In particular, $U$ is square integrable.
\end{prop}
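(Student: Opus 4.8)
The plan is to bound the $\Zc(\af+\mf)$-spectrum of $H_0(\oline\nf,U)$ from above by $-\rho+[\Lambda+\mu_0]^+$, and then read off the conclusion for $\E_U$ via Lemma~\ref{Lemma Hp}(\ref{spec Hp3}). First I would combine two containments. On the one hand, since $U$ is irreducible with infinitesimal character $[\Lambda+\mu_0]$, Lemma~\ref{Lemma Hp}(\ref{spec Hp}) gives $\Spec_{\Zc(\af+\mf)}H_0(\oline\nf,U)\subset -\rho+[\Lambda+\mu_0]$. On the other hand, $U=W/J$ is a quotient of $W$, and $W$ is the $\Zc(\gf)$-primary component of $V\otimes F_\mu$ for $\chi_{\Lambda+\mu_0}$, hence a direct summand of $V\otimes F_\mu$ as a $(\gf,K)$-module. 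By right exactness of $H_0(\oline\nf,-)$ and additivity of homology on direct sums, $H_0(\oline\nf,U)$ is a subquotient of $H_0(\oline\nf,V\otimes F_\mu)$, so Lemma~\ref{lemma Spec HP} applied to $F=F_\mu$ yields
\[\Spec_{\Zc(\af+\mf)}H_0(\oline\nf,U)\subset -\rho+[\Lambda]^++\Spec_{\Zc(\af+\mf)}F_\mu.\]

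The heart of the argument is to intersect these two bounds using the rigidity of Lemma~\ref{lemma Lambda mu comp}. Let $\xi\in\Spec_{\Zc(\af+\mf)}H_0(\oline\nf,U)$. Choosing $W_\mf$-representatives, the first containment gives $\xi+\rho=w(\Lambda+\mu_0)$ for some $w\in W_\cf$, while the second gives $\xi+\rho=\nu+\sigma$ with $\nu\in[\Lambda]^+$ and $\sigma$ a weight of $F_\mu$, i.e.\ $\sigma\in\Spec_\cf F_\mu$. Thus $w(\Lambda+\mu_0)=\nu+\sigma$, and since $\mu_0=N\Lambda\in W_\cf\cdot\mu$ and $\Lambda\in\R_+\mu_0$, Lemma~\ref{lemma Lambda mu comp} forces $w\Lambda=\nu$ and $w\mu_0=\sigma$. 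Consequently $\sigma=w\mu_0=Nw\Lambda=N\nu$, so that $\xi+\rho=(N+1)\nu\in[(N+1)\Lambda]=[\Lambda+\mu_0]$; and because $\nu\in[\Lambda]^+$ has $\re\nu|_{\af^+\bs\{0\}}<0$ we get $\re(\xi+\rho)|_{\af^+\bs\{0\}}=(N+1)\re\nu|_{\af^+\bs\{0\}}<0$. Hence $\xi\in-\rho+[\Lambda+\mu_0]^+$, proving $\Spec_{\Zc(\af+\mf)}H_0(\oline\nf,U)\subset-\rho+[\Lambda+\mu_0]^+$.

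To finish, let $\Upsilon\in[\Lambda_U]_\E$, so $\Upsilon|_\af\in\E_U$. By the first inclusion of Lemma~\ref{Lemma Hp}(\ref{spec Hp3}), $-\rho+\Upsilon|_\af\in\Spec_\af H_0(\oline\nf,U)$, and since the $\af$-spectrum is the image of the $\Zc(\af+\mf)$-spectrum under restriction to $\af$, there is a refinement $\xi\in\Spec_{\Zc(\af+\mf)}H_0(\oline\nf,U)$ with $\xi|_\af=-\rho+\Upsilon|_\af$. By the previous paragraph $(\xi+\rho)|_\af=\Upsilon|_\af$ satisfies $\re\Upsilon|_{\af^+\bs\{0\}}<0$, so $\Upsilon\in[\Lambda+\mu_0]^+$; this is exactly $[\Lambda_U]_\E\subset[\Lambda+\mu_0]^+$. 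Since (using $\E_U\subset[\Lambda_U]|_\af$) one has $\E_U=\{\Upsilon|_\af:\Upsilon\in[\Lambda_U]_\E\}$ and each such $\Upsilon$ lies in $[\Lambda+\mu_0]^+$, we conclude $\re\E_U\subset-\interior\Cc$, which is the square-integrability criterion \eqref{L^2-condition}; hence $U$ is a discrete series representation.

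The step I expect to be most delicate is the bookkeeping when the two spectral containments are combined: the elements of $\Spec_{\Zc(\af+\mf)}H_0(\oline\nf,U)$ are $W_\mf$-orbits in $\cf_\C^*$, and one must choose representatives compatibly so that the equation $w(\Lambda+\mu_0)=\nu+\sigma$ holds with $\sigma$ a genuine weight in $\Spec_\cf F_\mu$ rather than merely an abstract $\Zc(\mf)$-parameter; only then is Lemma~\ref{lemma Lambda mu comp} (whose proof rests on the norm bound $\|\sigma\|\le\|\mu_0\|$, valid for weights) applicable. Verifying that the $\Zc(\af+\mf)$-characters of $F_\mu$ are represented by weights, and that the $\rho$-shifts match under the chosen normalization of the Harish-Chandra homomorphism for $\Zc(\mf)$ with $\rho$ extended trivially on $\tf$, is the one routine-but-essential point to check.
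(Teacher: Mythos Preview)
Your argument is correct and follows essentially the same route as the paper: use that $W$ is a $\Zc(\gf)$-direct summand of $V\otimes F_\mu$ together with right exactness of $H_0(\oline\nf,-)$ to land $\Spec_{\Zc(\af+\mf)}H_0(\oline\nf,U)$ inside $-\rho+[\Lambda]^++\Spec_{\Zc(\af+\mf)}F_\mu$, intersect with the Casselman--Osborne bound $-\rho+[\Lambda_U]$, and invoke Lemma~\ref{lemma Lambda mu comp} to cut down to $-\rho+[\Lambda_U]^+$, from which \eqref{L^2-condition} via Lemma~\ref{Lemma Hp}\eqref{spec Hp3} finishes. The bookkeeping caveat you flag (choosing $W_\mf$-representatives so that $\sigma$ is a genuine $\cf$-weight of $F_\mu$) is exactly the point the paper passes over tacitly; your proof is in fact slightly more explicit here than the original.
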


\begin{proof} First recall that $W\subset V\otimes F_\mu$ is a direct summand as it is
a generalized $\Zc(\gf)$-eigenspace. Thus $H_0(\oline\nf,  W)\subset H_0(\oline \nf, V\otimes F_{\mu})$ as $MA$-module
and therefore
$$\Spec_{\Zc(\af+\mf)} H_0(\oline \nf, W) \subset - \rho +[\Lambda]^+ + \Spec_{\Zc(\af +\mf)} F_\mu$$
by Lemma \ref{lemma Spec HP}.  Now $U=W/J$ is a quotient of $W$ and thus the natural map
$H_0(\oline\nf, W)\twoheadrightarrow  H_0(\oline \nf, U)$ is surjective. We conclude that
\begin{equation} \label{incl U} \Spec_{\Zc(\af+\mf)} H_0(\oline \nf, U) \subset - \rho +[\Lambda]^+ + \Spec_{\Zc(\af +\mf)} F_\mu\, .\end{equation}
On the other hand we have $\Spec_{\Zc(\af+\mf)} H_0(\oline \nf, U) \subset - \rho +[\Lambda_U]$ by
Lemma  \ref{Lemma Hp}\eqref{spec Hp}. Comparing this with \eqref{incl U} and applying Lemma \ref{lemma Lambda mu comp} yields
$$\Spec_{\Zc(\af+\mf)} H_0(\oline \nf, U) \subset - \rho +[\Lambda_U]^+ \, .$$
Finally, from \eqref{L^2-condition} we deduce that $U$ is square integrable.
\end{proof}

Repeated application of Proposition \ref{Prop U ds} yields:

\begin{cor}\label{cor existence of ds on lines} There exists a $N\in\N$ such that if $V$ is a representation of the
discrete series with infinitesimal character $[\Lambda]$, then for every $k\in\N$ there exists a representation $U$ of the discrete series with infinitesimal character $[(kN+1)\Lambda]$ and  $\E_{U}\subset[ (kN+1)\Lambda]^{+}\big|_{\af}$.
\end{cor}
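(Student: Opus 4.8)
The plan is to obtain the corollary by iterating the construction that precedes Proposition \ref{Prop U ds}, but reusing at every stage one and the \emph{same} finite-dimensional representation. The crucial point I would exploit is that Proposition \ref{Prop U ds} and its proof use the particular value $\mu_0=N\Lambda$ only through Lemma \ref{lemma Lambda mu comp}, whose sole demand on $\mu_0$ is that it be an integral element lying on the ray $\R_{>0}\Lambda$. Because every infinitesimal character produced by the procedure again lies on this single ray $\R_{>0}\Lambda$, the fixed element $\mu_0=N\Lambda$ remains a positive multiple of each successive parameter, and consequently the infinitesimal character will grow \emph{additively} as $[(kN+1)\Lambda]$ rather than multiplicatively as $[(N+1)^k\Lambda]$.

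I would proceed by induction on $k$. Fix $N\in\N$ as after \eqref{def U}, so that $N\Lambda$ is integral, let $\mu$ be the dominant integral element of $W_\cf\cdot N\Lambda$, and let $F_\mu$ be the associated finite-dimensional representation; this one $F_\mu$ is used at every step. The induction hypothesis is that there is a discrete series module $V^{(k)}$ with infinitesimal character $[(kN+1)\Lambda]$ and $\E_{V^{(k)}}\subset[(kN+1)\Lambda]^+|_\af$. For the base case $k=0$ I take $V^{(0)}=V$, the containment $\E_V\subset[\Lambda]^+|_\af$ being immediate from $\E_V\subseteq[\Lambda]|_\af$ together with the discrete series condition \eqref{L^2-condition}. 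For the inductive step I write $\Lambda_k:=(kN+1)\Lambda$ for the parameter of $V^{(k)}$ and observe that $\mu_0=N\Lambda=\tfrac{N}{kN+1}\Lambda_k$ is an integral element of $\R_{>0}\Lambda_k$. Thus the hypotheses of Lemma \ref{lemma Lambda mu comp}, and hence of Proposition \ref{Prop U ds}, are met with $V^{(k)}$, $\Lambda_k$, $\mu_0$ in place of $V$, $\Lambda$, $\mu_0$. Applying Proposition \ref{Prop U ds} then yields an irreducible square integrable $V^{(k+1)}$ with infinitesimal character $[\Lambda_k+\mu_0]=[((k+1)N+1)\Lambda]$ and $[\Lambda_{k+1}]_\E\subset[\Lambda_{k+1}]^+$; restricting to $\af$ and using $\E_{V^{(k+1)}}\subseteq[\Lambda_{k+1}]|_\af$ upgrades this to $\E_{V^{(k+1)}}\subset[((k+1)N+1)\Lambda]^+|_\af$, closing the induction.

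The one step I expect to require genuine care, and the only real obstacle, is justifying that Proposition \ref{Prop U ds} may be invoked with the fixed $\mu_0=N\Lambda$ at the $k$-th stage, rather than with $N\Lambda_k$ as in its literal statement. Here I would revisit the proof of Proposition \ref{Prop U ds} and verify that $\mu_0$ enters only in two ways: that $\mu_0$ is integral, so that $F_\mu$ exists, and that $\Lambda_k\in\R_{>0}\mu_0$, which is precisely the hypothesis feeding Lemma \ref{lemma Lambda mu comp}. Both hold for the fixed $\mu_0=N\Lambda$ at every stage exactly because each $\Lambda_k$ lies on $\R_{>0}\Lambda$; no rescaling of $F_\mu$ is needed, and after $k$ iterations the parameter has travelled from $[\Lambda]$ to $[\Lambda+kN\Lambda]=[(kN+1)\Lambda]$, as required.
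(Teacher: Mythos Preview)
Your proposal is correct and takes essentially the same approach as the paper, whose entire proof is the single sentence ``Repeated application of Proposition \ref{Prop U ds} yields:''. You have correctly unpacked what this terse phrase must mean: to obtain the additive growth $(kN+1)\Lambda$ rather than the multiplicative $(N+1)^k\Lambda$, one must reuse the \emph{same} $\mu_0=N\Lambda$ and $F_\mu$ at every stage, and your observation that the proof of Proposition \ref{Prop U ds} only uses $\Lambda\in\R_{>0}\mu_0$ via Lemma \ref{lemma Lambda mu comp} is exactly the justification the paper leaves implicit.
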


\begin{cor}\label{cor ds -> ds with strong reg inf char} Suppose that there exists a representation of the discrete series. Then there
exists a representation of the discrete series with strongly regular infinitesimal character.
\end{cor}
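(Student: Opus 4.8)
The plan is to combine the collinear existence statement of Corollary \ref{cor existence of ds on lines} with a single \emph{transverse} application of Zuckerman's translation functor, using Lemma \ref{lemma tensor} to remain inside $L^{2}(G)$. The guiding observation is that strong regularity is a condition transverse to the ray $\R_{>0}\Lambda$, and so must be produced by moving off that ray rather than along it.

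I would first record the geometry. Since $W_{\cf,\rm ext}$ is a finite subgroup of $\Aut(\Sigma_\cf)$ acting linearly on $\cf_\R^{*}$, the non-strongly-regular locus
$$
\Sc:=\bigcup_{g\in W_{\cf,\rm ext}\setminus\{1\}}\{\xi\in\cf_\R^{*}\mid g\xi=\xi\}
$$
is a finite union of proper linear subspaces, and $\xi$ is strongly regular exactly when $\xi\notin\Sc$. As each fixed space is linear, membership in $\Sc$ is invariant under positive scaling. Hence no rescaling of $[\Lambda]$ can create strong regularity, and in particular the collinear translates $[(kN+1)\Lambda]$ of Corollary \ref{cor existence of ds on lines} are either all strongly regular or none are. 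Their purpose here is different: they serve to enlarge the available $L^{2}$-margin in the $\af$-directions.

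Concretely, let $V$ be the given discrete series with infinitesimal character $[\Lambda]$, $\Lambda\in\cf_\R^{*}$, and for $k\in\N$ let $U_{k}$ be the discrete series with infinitesimal character $[(kN+1)\Lambda]$ furnished by Corollary \ref{cor existence of ds on lines}, so $\re\E_{U_{k}}\subset-\interior\Cc$. By Lemma \ref{lemma tensor}, for any dominant integral $\mu$ with $\re\mu|_{\af}+\re\E_{U_{k}}\subset-\interior\Cc$ the module $U_{k}\otimes F_{\mu}$ lies in $L^{2}(G)$, so every irreducible subquotient of it is again square integrable. On the other hand, Zuckerman's theorem \cite[Th. 1.2(1)]{Z} guarantees that the shifted character $[(kN+1)\Lambda+\mu]$ (for the dominant representative of $(kN+1)\Lambda$) occurs in $U_{k}\otimes F_{\mu}$; taking an irreducible subquotient $U$ of the corresponding $\Zc(\gf)$-isotypic component thus yields, for every such $\mu$, a discrete series representation $U$ with infinitesimal character $[(kN+1)\Lambda+\mu]$. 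It then remains only to choose $k$ and $\mu$ so that $(kN+1)\Lambda+\mu\notin\Sc$, whence $U$ has strongly regular infinitesimal character and we are done; note that no regularity of the original $\Lambda$ need be assumed, since a generic $\mu$ regularizes the sum automatically, and Proposition \ref{prop sigma-Weyl} is not required for this direction.

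The main obstacle is exactly this genericity step: one must ensure that the translates reachable while staying in $L^{2}$ genuinely escape \emph{every} fixed hyperplane of $W_{\cf,\rm ext}$, including those that contain the full imaginary direction $(i\tf)^{*}$ and are transverse only in the $\af$-directions. This is where the two-step structure is essential. Because $\re\E_{U_{k}}$ scales with $(kN+1)$, the admissible range for $\mu|_{\af}$ in Lemma \ref{lemma tensor} grows linearly in $k$, while $\mu|_{i\tf}$ is never constrained by that estimate. Hence for $k$ large the admissible dominant integral $\mu$ fill a multidimensional slab in $\cf_\R^{*}$ that is unbounded along $(i\tf)^{*}$ and of arbitrarily large extent along $\af^{*}$; such a slab cannot be contained in any proper subspace, so it meets the complement of the finite union $\Sc$. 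Travelling far out along the ray first, and only then translating transversally, therefore opens up enough room in all coordinate directions at once to land on a strongly regular parameter.
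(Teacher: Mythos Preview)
Your proof is correct and essentially identical to the paper's: both use Corollary \ref{cor existence of ds on lines} to scale $\Lambda$ far enough along its ray that Lemma \ref{lemma tensor} permits tensoring with an arbitrary finite-dimensional $F_\mu$ while remaining in $L^2(G)$, then extract via Zuckerman \cite[Th.~1.2(1)]{Z} a square-integrable irreducible subquotient with infinitesimal character $[(kN+1)\tilde\Lambda+\mu]$, which is strongly regular for suitable $\mu$ and $k$. The only cosmetic differences are that the paper tensors with a product $F_{\mu_1}\otimes\cdots\otimes F_{\mu_m}$ of fundamental representations rather than a single $F_\mu$, and phrases the growth of the $L^2$-margin via $\operatorname{dist}(\E_k,-\partial\Cc)\to\infty$ rather than your slab language.
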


\begin{proof} Let $V$ be a representation of the discrete series with infinitesimal
character $[\Lambda]$ such that $\lambda=\Lambda\big|_{\af}\in \E_{V}$. By Corollary \ref{cor existence of ds on lines} there exists a discrete series representation  $V_{k}$ for every $k\in \N$  with infinitesimal character $[(kN+1)\Lambda]$ and $\E_k:=\E_{V_k} \subset [(kN+1)\Lambda]^{+}|_{\af}$. Since $[\Lambda]^{+}|_{\af}\subset -\operatorname{int} \Cc$, we have
$$
\lim_{k\to \infty} \operatorname{dist} (\E_k, -\partial \Cc )
\geq\lim_{k\to \infty}(kN+1) \operatorname{dist} ([\Lambda]^{+}|_{\af}, -\partial \Cc )
=\infty\, .
$$
It follows that for any $\mu \in \cf_\R^*$ there exists a $k$ such that
$$
\E_k + \operatorname{conv} \left(W_\cf \cdot \mu\big|_\af\right)
\subset -\operatorname{int}\Cc.
 $$
In view of  Lemma \ref{lemma tensor} this implies that  for every $m\in \N$ and any choice of fundamental representations $F_{\mu_1},  $ $\ldots, F_{\mu_m}$ there
exists a $n\in \N$ so that for every $k\in \N$ with $k\geq n$ all matrix coefficients of the representation
\begin{equation}\label{eq V_k tensor finite dim}
V_k\otimes F_{\mu_1} \otimes \ldots\otimes F_{\mu_m}
\end{equation}
are contained in $L^{2}(G)$. Let $\tilde{\Lambda}\in[\Lambda]$ be the dominant element with respect to $\Sigma_{\cf}^{+}$. In view of \cite[Theorem 1.2(1)]{Z} the representation (\ref{eq V_k tensor finite dim}) contains a subrepresentation with infinitesimal character
$[(kN+1)\tilde{\Lambda}+ \mu_1 + \ldots +\mu_m]$.

The proof will be finished by
showing that $(kN+1)\tilde{\Lambda}+ \mu_1 + \ldots +\mu_m$ is strongly regular for a suitable choice of
$\mu_{1},\dots, \mu_{m}$ and for all $k$ sufficiently large.
The strongly regular elements comprise the complement
of a finite union of proper subspaces of $\cf_\C^*$. We first choose $m$ and $\mu_1,\dots,\mu_m$ such that
$\mu:=\mu_1+\dots+\mu_m$ is outside of those subspaces which contain $\tilde\Lambda$.
Then so is $(kN+1)\tilde{\Lambda}+ \mu$ for any $k$.
Clearly each remaining subspace can contain
$(kN+1)\tilde{\Lambda}+ \mu$ for at most one value of $k$.
\end{proof}

\begin{cor}[Harish-Chandra]\label{cor HC-criterion} If a real reductive group $G$ admits a representation
of the discrete series, then there exists a compact Cartan subalgebra.
\end{cor}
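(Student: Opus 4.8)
The plan is to obtain the statement by composing the two corollaries already established, since all the substantive work lies in the preceding sections. The logical skeleton is the implication chain: existence of \emph{some} discrete series representation $\Rightarrow$ existence of a discrete series representation with \emph{strongly regular} infinitesimal character $\Rightarrow$ existence of a compact Cartan subalgebra.

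First I would apply Corollary \ref{cor ds -> ds with strong reg inf char} to an arbitrary discrete series representation $V$ of $G$. This upgrades $V$ to a discrete series representation whose infinitesimal character $[(kN+1)\tilde\Lambda + \mu_1 + \dots + \mu_m]$ is strongly regular. The mechanism is the translation principle: tensoring with suitable fundamental representations and extracting the correct $\Zc(\gf)$-isotypical summand keeps the relevant matrix coefficients in $L^2(G)$ (through Lemma \ref{lemma tensor} together with the leading-exponent criterion \eqref{L^2-condition}), while pushing the parameter far into the interior of the dual chamber, where the stabilizer in $W_{\cf,\mathrm{ext}}$ collapses to the identity.

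Second I would feed this strongly regular discrete series into Corollary \ref{cor sr implies elliptic}. Strong regularity forces, via Proposition \ref{prop sigma-Weyl}(2), that $-\sigma|_{\cf_\R}$ equals an element of $W_\cf$; by Proposition \ref{prop equivalences to theta inner} this is equivalent to $\theta$ being inner on $\gf_\C$; and Corollary \ref{cor theta inner -> cpt Cartan} then forces $\kf$ to have maximal rank, so that $\gf$ carries a compact Cartan subalgebra. The passage from a compact Cartan subgroup to a compact Cartan subalgebra is immediate under $\exp$, which completes the chain.

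The genuine obstacle is not in this final deduction but inside Corollary \ref{cor ds -> ds with strong reg inf char}: one must actually \emph{achieve} strong regularity, i.e.\ triviality of the full extended stabilizer including the contribution of $-\sigma$. The naive attempt of merely rescaling $\Lambda$ along the ray $\R_{>0}\Lambda$ cannot work, since the stabilizer of $c\Lambda$ in $W_{\cf,\mathrm{ext}}$ is independent of $c>0$; this is precisely why the construction shifts by $\mu_1+\dots+\mu_m$ for \emph{several} fundamental weights rather than rescaling, so that the resulting parameter can be placed off every reflection hyperplane of $W_{\cf,\mathrm{ext}}$ simultaneously. Once this is granted, the present corollary is a one-line composition of the two preceding ones.
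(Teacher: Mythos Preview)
Your proposal is correct and follows exactly the paper's approach: the proof is literally the composition of Corollary~\ref{cor ds -> ds with strong reg inf char} with Corollary~\ref{cor sr implies elliptic}, and your elaboration of the underlying mechanisms (Propositions~\ref{prop sigma-Weyl} and~\ref{prop equivalences to theta inner}, Corollary~\ref{cor theta inner -> cpt Cartan}, Lemma~\ref{lemma tensor}) simply unpacks what those corollaries already contain. One cosmetic slip: the passage from compact Cartan \emph{subgroup} (as produced by Corollary~\ref{cor sr implies elliptic}) to compact Cartan \emph{subalgebra} goes via the Lie functor, not via $\exp$.
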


\begin{proof} Combine Corollary \ref{cor ds -> ds with strong reg inf char} with Corollary \ref{cor sr implies elliptic}. \end{proof}

\end{document}